\newcommand{\R}{\mathbb R}
\newcommand{\C}{\mathbb C}
\newcommand{\p}{\partial}
\newcommand{\la}{\langle}
\newcommand{\ra}{\rangle}
\newcommand{\secao}[1]{\section{#1}\setcounter{equation}{0}}
\newtheorem{theorem}{Theorem}[section]
\newtheorem{proposition}[theorem]{Proposition}
\newtheorem{remark}[theorem]{Remark}
\newtheorem{lemma}[theorem]{Lemma}
\numberwithin{equation}{section}
\begin{document}
\title[Sharp well-posedness]{Sharp well-posedness for a coupled system of  mKdV  type equations}
\author{Xavier Carvajal}
\address{Instituto de Matem\'atica, Universidade Federal do Rio de Janeiro-UFRJ. Ilha do Fund\~{a}o,
21945-970. Rio de Janeiro-RJ, Brazil}
\email{carvajal@im.ufrj.br}
\author{Mahendra Panthee}
\address{Department of Mathematics, State University of Campinas, Brazil}
\email{mpanthee@ime.unicamp.br}
\thanks{M. Panthee was partially supported by CNPq (308131/2017-7) and FAPESP (2016/25864-6) Brazil.}

\keywords{Korteweg-de Vries equation, Cauchy problem,
local   well-posedness}
\subjclass[2000]{35Q35, 35Q53}

\begin{abstract}
We consider the initial value problem associated to a  system consisting     modified Korteweg-de Vries  type equations 
\begin{equation*}
\begin{cases}
\p_tv + \p_x^3v + \p_x(vw^2) =0,&v(x,0)=\phi(x),\\
\p_tw + \alpha\p_x^3w + \p_x(v^2w) =0,& w(x,0)=\psi(x),
\end{cases}
\end{equation*}
and prove  the local well-posedness results for   given data in low regularity Sobolev spaces
 $H^{s}(\R)\times H^{s}(\R)$, $s> -\frac12$, for $0<\alpha<1$. Our result covers the whole scaling sub-critical range of Sobolev regularity contrary to the case $\alpha =1$, where the local well-posedness holds only for $s\geq \frac14$.  We also prove that the local well-posedness result is sharp in two different ways, viz., for $s<-\frac12$ the key trilinear estimates used in the proof of the local well-posedness theorem fail to hold, and the flow-map that takes initial data to the solution fails to be $C^3$ at the origin. These results hold for $\alpha>1$ as well.
\end{abstract}

\maketitle

\secao{Introduction}
In this work we consider the initial value problem (IVP) associated to the  following  system of the  modified Korteweg-de Vries (mKdV) type equations
\begin{equation}\label{ivp-sy}
\begin{cases}
\p_tv + \p_x^3v + \p_x(vw^2) =0,&v(x,0)=\phi(x),\\
\p_tw + \alpha\p_x^3w + \p_x(v^2w) =0,& w(x,0)=\psi(x),
\end{cases}
\end{equation}
where   $(x, t) \in \R\times \R$; $v=v(x,t)$ and $w= w(x, t)$ are real-valued functions,  and $0<\alpha<1$ is a constant. 

For $\alpha =1$, the  system \eqref{ivp-sy} reduces to a special case of a broad class of nonlinear evolution equations considered by Ablowiz, Kaup, Newell and Segur \cite{AKNS} in the inverse scattering context. In this case, the well-posedness issues along with existence and stability of solitary waves for this system is widely studied in the literature.
Using the technique developed by Kenig, Ponce and Vega \cite{KPV3}, Montenegro \cite{Monte} proved that the IVP (\ref{ivp-sy}) with $\alpha =1$ is locally
well-posed for given data $(\phi, \psi)$ in $H^s(\R)\times H^s(\R)$, $s\geq \frac{1}{4}$. In this approach one uses the smoothing property of the linear group combined with the $L_x^pL_t^q$ Strichartz estimates and maximal function estimates. Tao \cite{Tao} showed that this local result can also be proved by using the Fourier transform restriction norm space $X_{s,b}$ (see definition \eqref{xsb}  below)  introduced by Bourgain \cite{B-93}. In this method the trilinear estimate
\begin{equation}
\label{Tri-ln}
\|\p_x(uvw)\|_{X_{s,b'}}\lesssim \|u\|_{X_{s,b}}\|v\|_{X_{s,b}}\|w\|_{X_{s,b}}
\end{equation}
that is valid for $s\geq \frac14$ plays a central role to apply contraction mapping principle. Author in \cite{Monte} also proved global well-posedness for given data in $H^s(\R)\times H^s(\R)$, $s\geq 1$, using the conservation laws
\begin{equation*}\label{con21}
I_1(v, w) := \int_{\R} (v^2 +w^2)\,dx
\end{equation*}
and
\begin{equation*}\label{con22}
I_2(v, w) := \int_{\R} (v_x^2 + w_x^2 - v^2w^2)\, dx,
\end{equation*}
satisfied by the flow of (\ref{ivp-sy}). This global result is further improved in \cite{CP} by proving for data with regularity $s>\frac14$. For existence and estability of solitary waves to the system  \eqref{ivp-sy} we refer to works in  \cite{AGM1} and \cite{Monte}. It is worth noting that the local well-posedness result for the system \eqref{ivp-sy} with $\alpha =1$ is sharp as it can be justified in two different way. First, the key trilinear estimate \eqref{Tri-ln} fails whenever $s<\frac14$, see \cite{KPV-b}. Second,  the associated IVP  for $s<\frac14$ is ill-posed in the sense that the mapping data-solution is not uniformly continuous, see \cite{KPV-01}. This notion of ill-posedness is a bit strong. For further works in this direction, we refer to \cite{CCT}.  

For  $0<\alpha<1$, very less is known regarding well-posedness issues for the IVP \eqref{ivp-sy}. In this work, we are interested to deal with these issues for given data in low regularity Sobolev spaces considering $0<\alpha<1$.  We note that, the approach of Kenig, Ponce, Vega \cite{KPV3} yields local well-posedness for $s\geq\frac14$ for $\alpha\ne 1$ too. However, if one uses the Fourier transform restriction norm space the situation is quite different. For motivation, we recall the work of Oh \cite{Oh09, Oh091} for the KdV and the Majda-Biello system introduced in  \cite{MB}
\begin{equation}\label{Majda-Biello}
\begin{cases}
\p_tv +\p_x^3v +\p_x(w^{2})=0,&v(x,0)=\phi(x),\\
\p_tw + \alpha \p_x^3w +\p_x (w v)=0,& w(x,0)=\psi(x),
\end{cases}
\end{equation}
 where $0<\alpha<1$. The author in \cite{Oh09} used the Fourier transform restriction norm method and proved that the IVP \eqref{Majda-Biello} is locally well-posed for data with regularity $s\geq 0$. He also showed that the well-posedness result  is sharp in the sense that if one demands $C^2$ regularity for the flow-map, the condition $s\geq 0$ is necessary. The main tool in the proof was the validity of the bilinear estimate for the interacting nonlinearity 
 \begin{equation}
\label{Bii-ln}
\|\p_x(vw)\|_{X_{s,b'}^{\alpha}}\lesssim \|v\|_{X_{s,b}}\|w\|_{X_{s,b}^{\alpha}}
\end{equation}
whenever $s\geq 0$ (for definition of $X_{s,b}$ and $X_{s,b}^{\alpha}$ spaces, see \eqref{xsb} and \eqref{xsb-a}  below.) Recall that the bilinear estimate \eqref{Bii-ln} in the case $\alpha =1$ holds for $s>-\frac34$ (see \cite{KPV-b}). Observe that, the regularity requirement for the validity of the bilinear estimates for the interacting nonlinearity in the case  $0<\alpha<1$ is higher than that in the case $\alpha = 1$. In the Fourier transform restrictionn norm method one needs to restrict the Fourier transform in the cubics $\tau = \xi^3$ and $\tau=\alpha\xi^3$. When $\alpha=1$, both the cubics are the same and one can obtain the similar  bilinear and trilinear estimates as in the individual KdV and mKdV equations. However, if $\alpha\ne 1$ the cubics are different and the frequency interactions behave differently. As observed  in \cite{Oh09} while studying a coupled system of the KdV equations and the Majda-Bielo system, no cancellation occurs and consequently needs higher regularity in the data to get required bilinear estimates.  Now the natural question is, how about the trilinear estimate if one considers $\alpha\ne 1$? 
 
 The main focus of this work is to answer the question posed above. In fact, considering   $0<\alpha< 1$ we prove that the trilinear estimate for the interacting nonlinear terms holds true whenever $s>- \frac12$. As a consequence we obtain  the local well-posedness result for the IVP \eqref{ivp-sy} for given data $(\phi,\psi)\in H^{s}(\R)\times H^{s}(\R)$,  $s>- \frac12$.  More precisely, we prove the following result.

\begin{theorem}\label{loc-sys}
Let $0<\alpha<1$ and $s> -\frac12$, then for any $(\phi,\psi)\in H^{s}(\R)\times H^{s}(\R)$,  there exist $\delta = \delta(\|(\phi,\psi)\|_{H^{s}\times H^{s}})$ (with $\delta(\rho)\to \infty$ as $\rho\to 0$) and a unique solution $(v,w)\in X^{1, \delta}_{s, b}\times X^{\alpha, \delta}_{s, b}$ to the IVP \eqref{ivp-sy} in the time interval $[0, \delta]$. Moreover, the solution satisfies the estimate
\begin{equation*}\label{est-1s}
\|(v, w)\|_{X^{1, \delta}_{s, b}\times X^{\alpha, \delta}_{s, b}}\lesssim \|(\phi,\psi)\|_{H^{s}\times H^{s}},
\end{equation*}
where the norm $\|\cdot\|_{X^{ 1,\delta}_{s, b}}$ and  $\|\cdot\|_{X^{\alpha, \delta}_{s, b}}$ are as defined in \eqref{xsb-rest}.
\end{theorem}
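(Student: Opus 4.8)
The plan is to recast \eqref{ivp-sy} as a fixed-point problem for the Duhamel operator in the restricted Bourgain spaces $X^{1,\delta}_{s,b}\times X^{\alpha,\delta}_{s,b}$ and to close a contraction there by combining the standard linear estimates with the trilinear estimates for the coupled cubic terms established in the next section. Writing $U(t)=e^{-t\p_x^3}$ and $U_\alpha(t)=e^{-\alpha t\p_x^3}$ for the two linear propagators and fixing $\eta\in C_0^\infty(\R)$ with $\eta\equiv1$ on $[-1,1]$ and $\eta_\delta(t):=\eta(t/\delta)$, a solution of \eqref{ivp-sy} on $[0,\delta]$ is produced by a fixed point of $\Phi=(\Phi_1,\Phi_2)$,
\begin{align*}
\Phi_1(v,w)(t)&=\eta_\delta(t)\,U(t)\phi-\eta_\delta(t)\int_0^tU(t-t')\,\p_x\big(vw^2\big)(t')\,dt',\\
\Phi_2(v,w)(t)&=\eta_\delta(t)\,U_\alpha(t)\psi-\eta_\delta(t)\int_0^tU_\alpha(t-t')\,\p_x\big(v^2w\big)(t')\,dt'.
\end{align*}
First I would record the group and Duhamel estimates, which hold for both dispersions: $\|\eta_\delta\,U(t)\phi\|_{X^1_{s,b}}\lesssim\|\phi\|_{H^s}$, and, for $b>\frac12$ close to $\frac12$, $b-1<b'<0$, and $\delta>0$,
\[
\Big\|\eta_\delta(t)\int_0^tU(t-t')F(t')\,dt'\Big\|_{X^1_{s,b}}\lesssim\delta^{\,1-b+b'}\,\|F\|_{X^1_{s,b'}},
\]
together with the identical statements with $U$, $X^1$ replaced by $U_\alpha$, $X^\alpha$. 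The positive power $\delta^{\,1-b+b'}$ is the source of the smallness that closes the iteration.

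Next I would invoke the trilinear estimates of the next section, which for $0<\alpha<1$ and $s>-\frac12$ (with $b,b'$ as above) hold in the multilinear form
\[
\|\p_x(u_1u_2u_3)\|_{X^1_{s,b'}}\lesssim\|u_1\|_{X^1_{s,b}}\|u_2\|_{X^\alpha_{s,b}}\|u_3\|_{X^\alpha_{s,b}},\qquad
\|\p_x(u_1u_2u_3)\|_{X^\alpha_{s,b'}}\lesssim\|u_1\|_{X^1_{s,b}}\|u_2\|_{X^1_{s,b}}\|u_3\|_{X^\alpha_{s,b}}.
\]
By multilinearity these immediately produce the corresponding estimates for the differences $vw^2-\tilde v\tilde w^2$ and $v^2w-\tilde v^2\tilde w$, with constants controlled by the $X^{1,\delta}_{s,b}\times X^{\alpha,\delta}_{s,b}$ norms of $(v,w)$ and $(\tilde v,\tilde w)$.

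Combining the two previous steps, on the ball $B_R=\{(v,w):\|(v,w)\|_{X^{1,\delta}_{s,b}\times X^{\alpha,\delta}_{s,b}}\le R\}$ one gets $\|\Phi(v,w)\|\le c\|(\phi,\psi)\|_{H^s\times H^s}+C\,\delta^{\theta}R^3$ and $\|\Phi(v,w)-\Phi(\tilde v,\tilde w)\|\le C\,\delta^{\theta}R^2\,\|(v,w)-(\tilde v,\tilde w)\|$, with $\theta:=1-b+b'>0$. Taking $R=2c\|(\phi,\psi)\|_{H^s\times H^s}$ and then choosing $\delta$ so that $C\,\delta^{\theta}R^2\le\frac12$ makes $\Phi$ a contraction of $B_R$ into itself; the cube in the nonlinear estimate together with the $\delta$-gain forces this $\delta$ to be a decreasing function of $\|(\phi,\psi)\|_{H^s\times H^s}$ tending to $\infty$ as the latter tends to $0$, as claimed. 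The Banach fixed-point theorem then gives a unique fixed point in $B_R$, hence the asserted solution on $[0,\delta]$ and the stated bound; uniqueness in the full space $X^{1,\delta}_{s,b}\times X^{\alpha,\delta}_{s,b}$ and continuous dependence on $(\phi,\psi)$ follow from the same multilinear estimates by the usual arguments.

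All of the genuine difficulty sits in the trilinear estimates of the next section, which I regard as the main obstacle. Because the two equations carry the distinct dispersions $\xi^3$ and $\alpha\xi^3$, the resonance function controlling $\p_x(vw^2)$ is $(\xi_1+\xi_2+\xi_3)^3-\xi_1^3-\alpha\xi_2^3-\alpha\xi_3^3$ (and the analogous expression with the roles of $1$ and $\alpha$ exchanged for $\p_x(v^2w)$), which, unlike the $\alpha=1$ case where it equals $3(\xi_1+\xi_2)(\xi_2+\xi_3)(\xi_3+\xi_1)$, does not factor cleanly. The structural fact to exploit is that several frequency configurations that are resonant when $\alpha=1$ now carry a modulation of size proportional to $1-\alpha$, and it is a careful region-by-region analysis of the high--low and high--high interactions built on this that simultaneously fixes the threshold $s=-\frac12$ and, in contrast with $\alpha=1$, removes the obstruction at $s=\frac14$. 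Once those estimates are in hand, the linear estimates and the contraction argument above are entirely routine.
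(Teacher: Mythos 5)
Your proposal is correct and follows essentially the same route as the paper: the same cut-off/Duhamel reformulation, the same linear estimates with the gain $\delta^{1-b+b'}$, the same trilinear estimates in multilinear form to close the contraction on a ball of radius comparable to $\|(\phi,\psi)\|_{H^s\times H^s}$, with all the genuine work deferred to Proposition \ref{prop1}. The only cosmetic difference is that the paper keeps the fixed cut-off $\psi_1$ on the free-evolution term and $\psi_\delta$ only on the Duhamel term, which changes nothing in the argument.
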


 The local well-posedness result obtained in  Theorem \ref{loc-sys} improves the result obtained by smoothing effect combined with the $L_x^pL_t^q$ Strichartz estimates  and maximal function estimates.  The main ingredients in the proof of Theorem \ref{loc-sys} are the new trilinear estimates (for exact statement see Proposition \ref{prop1} below)
\begin{equation}\label{tlint-m1}
\|(vw^2)_x\|_{X_{s,b'}^{1}}\lesssim  \|v\|_{X_{s,b}^{1}} \|w\|_{X^{\alpha}_{s,b}}^2
\end{equation}
and
\begin{equation}\label{tlint-m2}
 \|(v^2w)_x\|_{X^{\alpha}_{s,b'}}\lesssim  \|v\|_{X_{s,b}^{1}}^2 \|w\|_{X^{\alpha}_{s,b}},
\end{equation}
 which hold for $s>-\frac12$ when $0<\alpha<1$.
 
 It is quite surprising to note that, in contrast to the bilinear estimate, for the validity of the  above trilinear estimates with  $0<\alpha<1$  the regularity requirement is quite lower than the one required for case with $\alpha=1$. In fact, if one considers $\alpha =1$ the above trilinear estimates fail to hold whenever $s<\frac14$ (see \cite{KPV-b}). As can be seen in the proof of Lemma \ref{lema2} below,  when $0<\alpha<1$, no cancellation occurs in the resonance relation and this can be exploited in a positive way to lower the regularity requirement for the validity of the trilinear estimate. However, as discussed above, in the case of the corresponding bilinear estimates the regularity requirement is higher when two different cubics are considered, see \cite{Oh09}.
 
 We emphasize that the trilinear estimates for $0<\alpha<1$ are valid in the whole scaling sub-critical range, i.e., $s>-\frac12$. This is in quite contrast to the case $\alpha =1$ where it holds only for $s>\frac14$. At this point, we recall the work in \cite{NTT} where the authors produce a counter example to prove failure of bilinear estimate for $s=-\frac34$. So, it is natural to ask whether a similar example can be constructed for the trilinear estimates \eqref{tlint-m1} and \eqref{tlint-m2} for $s=-\frac12$. Also, we mention the work in \cite{CHT} where authors prove existence of global solution to the mKdV equation for given data with negative Sobolev regularity. Although, the mapping data-solution fails to be uniformly continuous for $s<\frac14$, they got such global result by  obtaining an appropriate global in time $H^s$- {\em a priori} estimate for $-\frac18<s<\frac14$. So, the another natural question is, whether global solution to the system \eqref{ivp-sy} can be found in line of \cite{CHT}. We are working on these questions and will be answered elsewhere.
 
 We also prove that the local well-posedness result obtained in Theorem \ref{loc-sys} is sharp in two different ways. First, we prove that  the crucial trilinear estimates  \eqref{tlint-m1}  and \eqref{tlint-m2}  fail to hold if $s<-\frac12$. Next, we show that the application that takes the initial data to the solution fails to be $C^3$ whenever $s<-\frac12$. More precisely, we prove the following results.
 
 \begin{proposition}\label{prop1ill}
Let  $0<\alpha<1$. The trilinear estimates \eqref{tlint-m1} and \eqref{tlint-m2} fail to hold for any $b \in \R$ whenever  $s<-1/2$.
\end{proposition}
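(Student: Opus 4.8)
The plan is to refute each of the trilinear estimates \eqref{tlint-m1} and \eqref{tlint-m2} by producing, for every $s<-\frac12$, a family of test functions indexed by $N\to\infty$ for which the ratio (left side)/(right side) diverges; since every modulation that appears below will be of size $O(1)$, no power of $b$ or $b'$ enters the power of $N$, so the argument applies to all $b,b'\in\R$ simultaneously. Recall $\|u\|_{X^1_{s,b}}^2=\int\langle\xi\rangle^{2s}\langle\tau-\xi^3\rangle^{2b}|\hat u|^2$ and $\|u\|_{X^\alpha_{s,b}}^2=\int\langle\xi\rangle^{2s}\langle\tau-\alpha\xi^3\rangle^{2b}|\hat u|^2$. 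For \eqref{tlint-m1}, set $\mu=N^{-2}$ and take $\hat v=\mathbf 1_A$, $\hat w=\mathbf 1_{B^+}+\mathbf 1_{B^-}$, where
\begin{equation*}
A=\{(\xi,\tau):\ \xi\in[N,N+\mu],\ |\tau-\xi^3|\le 1\},\qquad B^\pm=\{(\xi,\tau):\ \pm\xi\in[N,N+\mu],\ |\tau-\alpha\xi^3|\le 1\}.
\end{equation*}
Each of $A,B^\pm$ has measure $\sim\mu$, and on each the frequency weight is $\sim N^{2s}$ while the modulation weight is a constant of order one; hence $\|v\|_{X^1_{s,b}}\sim N^{s}\mu^{1/2}=N^{s-1}$ and $\|w\|_{X^\alpha_{s,b}}\sim N^{s-1}$. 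The role of the small width $\mu=N^{-2}$, and of the two opposite-frequency boxes $B^\pm$ for the field occurring twice, becomes clear in the next step.

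Next I would isolate the resonant interaction. In the convolution producing $\widehat{(vw^2)_x}$, the patterns with both copies of $\hat w$ in the same box put the output frequency near $3N$ or $-N$, where the resonance $\xi^3-\xi_1^3-\alpha\xi_2^3-\alpha\xi_3^3$ has size $\sim N^3$; these contribute nothing near frequency $+N$, where only the ``mixed'' patterns survive. Writing $\xi_1=N+a$, $\xi_2=N+c$, $\xi_3=-N-d$ with $a,c,d\in[0,\mu]$, one has $\xi_2+\xi_3\in[-\mu,\mu]$, hence $\xi=\xi_1+(\xi_2+\xi_3)=\xi_1+O(\mu)$, $\alpha(\xi_2^3+\xi_3^3)=3\alpha N^2(c-d)+O(N\mu^2)=O(1)$, and $\xi^3=\xi_1^3+O(N^2\mu)=\xi_1^3+O(1)$, so that
\begin{equation*}
\tau-\xi^3=(\tau_1-\xi_1^3)+(\tau_2-\alpha\xi_2^3)+(\tau_3-\alpha\xi_3^3)+\bigl(\xi_1^3+\alpha\xi_2^3+\alpha\xi_3^3-\xi^3\bigr)=O(1)
\end{equation*}
throughout the support of the mixed interaction. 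A direct count — using that all three frequency boxes share the width $\mu=N^{-2}$, so the cubic curvature over them costs only $O(1)$, and enlarging the modulation windows to a fixed $\alpha$-dependent constant if needed — produces a set $\Omega$ with $|\Omega|\gtrsim\mu$ on which $|\xi|\sim N$, $|\tau-\xi^3|\lesssim 1$, and (because the admissible $(\xi_1,\tau_1,\xi_2,\tau_2)$ there fill a set of measure $\gtrsim\mu^2$) $|\widehat{(vw^2)_x}(\xi,\tau)|\gtrsim|\xi|\,\mu^2\gtrsim N^{-3}$. Hence $\|(vw^2)_x\|_{X^1_{s,b'}}\gtrsim(N^{2s}\cdot N^{-6}\cdot\mu)^{1/2}=N^{s-4}$, whereas $\|v\|_{X^1_{s,b}}\|w\|_{X^\alpha_{s,b}}^2\sim N^{3s-3}$; so \eqref{tlint-m1} would force $N^{s-4}\lesssim N^{3s-3}$, i.e.\ $s\ge-\frac12$, and for $s<-\frac12$ the ratio grows like $N^{-2s-1}\to\infty$.

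Estimate \eqref{tlint-m2} is handled by the mirror construction: the doubled field $v$ is placed on two narrow boxes near $\xi=\pm N$ around $\tau=\xi^3$ and $w$ on one narrow box near $\xi=N$ around $\tau=\alpha\xi^3$. The surviving interaction $v(N)\,v(-N)\,w(N)$ has output frequency $\approx N$; the opposite copies of $v$ make $\xi_1^3+\xi_2^3=O(1)$, and since $\xi\approx\xi_3$ one has $\alpha\xi_3^3-\alpha\xi^3=O(1)$, so again $\tau-\alpha\xi^3=O(1)$ on the relevant set, the other sign patterns being non-resonant. The size computation is identical and yields the same threshold $s=-\frac12$. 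The only genuinely delicate step, in either case, is this trilinear measure count: one must verify that the three width-$N^{-2}$ frequency constraints, together with the $O(1)$ modulation windows, are simultaneously realizable on a set of size $\sim\mu^2\times 1$ — the width $N^{-2}$ being precisely what keeps the cubic curvature negligible. Everything else is bookkeeping of powers of $N$.
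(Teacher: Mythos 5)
Your construction is correct and is essentially the paper's own argument: the paper takes frequency boxes of width $N^{-2}$ centred at $c_1N,c_2N,c_3N$ with $c_1+c_2+c_3=1$ and $c_1^3+\alpha c_2^3+\alpha c_3^3=1$, and your choice $(\xi_1,\xi_2,\xi_3)\approx(N,N,-N)$ is exactly the instance $(c_1,c_2,c_3)=(1,1,-1)$ of that system, with the same unit modulation windows, the same convolution measure count (the paper's Lemma on $\|\chi_{R_1}\convolution\chi_{R_2}\convolution\chi_{R_3}\|_{L^2}\sim|R|^{5/2}$), and the same resulting obstruction $N^{-2s-1}\lesssim 1$.
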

 
 \begin{theorem}\label{mainTh-ill1.1}
Let $0<\alpha<1$. For any $s< -\frac12$ and for given  $(\phi, \psi)\in H^s(\R)\times H^s(\R)$, there exist no time $T =T(\|(\phi, \psi)\|_{H^s\times H^s})$ such that the application that takes initial data $(\phi, \psi)$ to the solution $(v, w) \in C([0,T];H^s) \times C([0,T];H^s)$ to the IVP \eqref{ivp-sy} is $C^3$ at the origin.
\end{theorem}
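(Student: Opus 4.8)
The plan is to argue by contradiction using the standard Picard-iteration/power-series expansion of the flow-map around the origin. Suppose that for some $s<-\frac12$ the data-to-solution map $\Phi:(\phi,\psi)\mapsto(v,w)$ is $C^3$ at $(0,0)$. Consider the one-parameter family of data $(\phi,\psi)=(\lambda\phi_0,\lambda\psi_0)$ for a fixed pair $(\phi_0,\psi_0)\in H^s\times H^s$ and $\lambda\in\R$ small. By uniqueness and the symmetry of the system, the first component $v=v(\lambda)$ of the solution is odd in $\lambda$ while the contributions are organised by homogeneity, so the assumed $C^3$ regularity gives a Taylor expansion in $\lambda$ whose third-order term in $v$ is, after using Duhamel's formula,
\begin{equation}\label{third-order}
v_3(t)=-\int_0^t U_1(t-t')\,\p_x\bigl(v_1 w_1^2\bigr)(t')\,dt',
\end{equation}
where $U_1(t)=e^{-t\p_x^3}$, $v_1(t)=U_1(t)\phi_0$, $w_1(t)=U_\alpha(t)\psi_0$, with $U_\alpha(t)=e^{-\alpha t\p_x^3}$; the $C^3$ hypothesis forces $\sup_{t\in[0,T]}\|v_3(t)\|_{H^s}\lesssim\|\phi_0\|_{H^s}\|\psi_0\|_{H^s}^2$. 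Analogously one gets the bound $\sup_{t\in[0,T]}\|w_3(t)\|_{H^s}\lesssim\|\phi_0\|_{H^s}^2\|\psi_0\|_{H^s}$ for the third-order term of the second component.

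The core of the argument is then to exhibit a sequence of data $(\phi_0^N,\psi_0^N)$, $N\to\infty$, normalised so that $\|\phi_0^N\|_{H^s}\sim\|\psi_0^N\|_{H^s}\sim1$, for which $\|v_3^N(T)\|_{H^s}\to\infty$ (or $\|w_3^N(T)\|_{H^s}\to\infty$), contradicting the above. I would build these on the Fourier side, mimicking the counterexample used to prove Proposition \ref{prop1ill}: choose $\widehat{\phi_0^N}$ and $\widehat{\psi_0^N}$ to be characteristic functions of carefully placed intervals near frequency $\pm N$ of length $\sim N^{-1}$ (or constant length), arranged so that the resonance function $H(\xi_1,\xi_2,\xi_3)=\xi_1^3+\alpha\xi_2^3+\alpha\xi_3^3-(\xi_1+\xi_2+\xi_3)^3$ (coming from the phases in \eqref{third-order} after taking space-time Fourier transform) is as small as possible on the support, so that the time integral in \eqref{third-order} does not produce decay. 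The point exploited in Lemma \ref{lema2} — that for $0<\alpha<1$ no cancellation occurs in this resonance relation — here cuts the other way: when $s<-\frac12$ one can still find a frequency configuration on which $H$ is small while the output frequency $\xi_1+\xi_2+\xi_3$ is comparable to $N$, so the $\p_x$ derivative contributes a factor $N$, the three input $H^s$ norms cost $N^{3s}$, and the output $H^s$ norm gains $N^s$; counting the measures of the intervals one finds the net power of $N$ is positive precisely when $s<-\frac12$, giving divergence.

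The main obstacle is the bookkeeping in the last step: one must choose the interval lengths and positions so that (i) the resonance function $H$ stays within $O(1/T)$ on the whole product of supports — this is a nontrivial geometric constraint because $H$ has a nondegenerate quadratic part once $\alpha\ne1$, unlike the $\alpha=1$ mKdV case where $H$ factors — and (ii) the convolution defining $v_3$ does not lose due to cancellation in the integrand (so one wants the integrand essentially nonnegative, which is arranged by taking the Fourier transforms nonnegative and checking the phase is nearly constant). Once the example is in place, a routine computation of the $H^s$ norms yields the contradiction; I would also remark, as the authors do, that the construction is insensitive to the sign of $\alpha-1$, so the same argument gives the result for $\alpha>1$.
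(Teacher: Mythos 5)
Your plan is essentially the paper's own argument: the authors also reduce the $C^3$ failure to unboundedness of the third Picard iterate $\Phi_3=-6\int_0^t U(t-t')\p_x[U(t')\phi\,(U^{\alpha}(t')\psi)^2]dt'$ and contradict it with data whose Fourier transforms are (normalized) characteristic functions of intervals of width $\gamma$ centered at $c_jN$ with $c_1+c_2+c_3=1$ and $c_1^3+\alpha c_2^3+\alpha c_3^3=1$, so that the resonance quantity $A=\xi_1^3+\alpha\xi_2^3+\alpha\xi_3^3-\xi^3$ is $O(\gamma N^2)$ and the lower bound $\|\Phi_3\|_{H^s}\gtrsim N^{1-2s}\gamma t$ diverges for $s<-\tfrac12$. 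One small correction: your criterion (i) forces $\gamma\sim N^{-2}$ (the paper takes $\gamma=\epsilon N^{-2}$), so the tentative choice of intervals of length $N^{-1}$ or of constant length would not keep the phase nearly constant; with $\gamma\sim N^{-2}$ your power count is exactly the paper's.
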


 This negative result makes sense, because if one uses contraction mapping principle to prove local well-posedness, the flow-map turns out to be smooth. 
 
 \begin{remark}
 All the results stated above hold for $\alpha>1$ as well. This can be justified by using symmetry of the system and scaling $\tilde{v}(x, t) =v(\alpha^{-\frac13}x, t)$ and $\tilde{w}(x, t) =w(\alpha^{-\frac13}x, t)$, so that $(\tilde{u}, \tilde{v})$ satisfy
 \begin{equation*}
\begin{cases}
\p_t\tilde{v}+\frac{1}{\alpha} \p_x^3\tilde{v} + \p_x(\tilde{v}\tilde{w}^2) =0,\\
\p_t\tilde{w} + \p_x^3\tilde{w} + \p_x(\tilde{v}^2\tilde{w}) =0,
\end{cases}
\end{equation*}
with $0<\frac{1}{\alpha} <1$.
 \end{remark}
 
 Before finalizing this section, we outline the structure of this paper. In Section \ref{Prel-e} we introduce function spaces and other notations used in this work and record some preliminary estimates. In Section \ref{local-r} we derive the main trilinear estimate and use it to prove the local well-posedness result stated in Theorem \ref{loc-sys}. Finally, in Section \ref{ill-posd-T}, we prove the failure of the trilinear estimates and the ill-posedness result stated respectively in Proposition \ref{prop1ill} and Theorem \ref{mainTh-ill1.1}.

\section{Function Spaces and Preliminary Estimates}\label{Prel-e}

In this section we introduce the function spaces and notations that are used throughout this paper. Also, we will record some preliminary estimates that are essential to prove the well-posedness result stated in Theorem \ref{loc-sys}. As described in the previous section we will  use the Fourier transform restriction norm space, the so called Bourgain's space.

For $f:\R\times [0, T] \to \R$ we define the mixed Lebesgue space 
 $L_x^pL_T^q$ with norm defined by
\begin{equation*}
\|f\|_{L_x^pL_T^q} = \left(\int_{\R}\left(\int_0^T |f(x, t)|^q\,dt
\right)^{p/q}\,dx\right)^{1/p},
\end{equation*}
with usual modifications when $p = \infty$. We replace $T$ by $t$ if $[0, T]$ is the whole real line $\R$.

We use $\widehat{f}(\xi)$ to denote  the Fourier transform of $f(x)$ defined by
$$
\widehat{f}(\xi) = c \int_{\R}e^{-ix\xi}f(x)dx$$
and
$\widetilde{f}(\xi,\tau)$ to denote  the Fourier transform of $f(x,t)$ defined by
$$
\widetilde{f}(\xi, \tau) = c \int_{\R^2}e^{-i(x\xi+t\tau)}f(x,t)dxdt.$$

We use $H^s$  to denote the $L^2$-based Sobolev space of order $s\in\R$ with norm
$$\|f\|_{H^s(\R)} = \|\langle \xi\rangle^s \widehat{f}(\xi)\|_{L^2_{\xi}},$$
where $\langle \xi\rangle = 1+|\xi|$.

Next, we introduce the  Fourier transform restriction norm spaces, more commonly known as Bourgain's space in literature.

For $s, b \in \R$,  we define the Fourier transform restriction norm spaces $X_{s,b}(\R\times\R)$ and $X^{\alpha}_{s,b}(\R\times\R)$ as completion of a space of Schwartz class functions with respective norms
\begin{equation}\label{xsb}
\|f\|_{ X_{s, b}} = \|(1+D_t)^b U(t)f\|_{L^{2}_{t}(H^{s}_{x})} = \|\langle\tau -\xi^3\rangle^b\langle \xi\rangle^s \widetilde{f}(\xi, \tau)\|_{L^2_{\xi,\tau}},
\end{equation}
and
\begin{equation}\label{xsb-a}
\|f\|_{ X^{\alpha}_{s, b}} = \|(1+D_t)^b U^{\alpha}(t)f\|_{L^{2}_{t}(H^{s}_{x})} = \|\langle\tau-\alpha\xi^3\rangle^b\langle \xi\rangle^s \widetilde{f}(\xi, \tau)\|_{L^2_{\xi,\tau}},
\end{equation}
 where $U(t) = e^{-t\partial^{3}_{x}}$ and $U^{\alpha}(t) = e^{-t\alpha\partial^{3}_{x}}$ are the unitary groups associated to the linear problems, and the operator $(1+D_t)^b$ are defined via the Fourier transform as  $[U^{\alpha}(t)f]\,^{\widehat{}}(\xi)=e^{it\alpha \xi^3} \widehat{f}(\xi)$ and $[(1+D_t)^bf]\,^{\widehat{}}(\tau)=\langle\tau\rangle^b\widehat{f}(\tau)$.

If $b> \frac12$, the Sobolev lemma implies that, $ X_{s, b} \subset C(\R ; H^s_x(\R)).$ For any interval $I$, we define the localized spaces $X_{s,b}^I:= X_{s,b}(\R\times I)$ with norm
\begin{equation}\label{xsb-rest}
\|f\|_{ X_{s, b}(\R\times I)} = \inf\big\{\|g\|_{X_{s, b}};\; g |_{\R\times I} = f\big\}.
\end{equation}
Sometimes we use the definition $X_{s,b}^{1,\delta}:=\|f\|_{ X_{s, b}(\R\times [0, \delta])}$ and similar for $X_{s,b}^{{\alpha},\delta}$.

 We use $c$ to denote various  constants whose exact values are immaterial and may
 vary from one line to the next. We use $A\lesssim B$ to denote an estimate
of the form $A\leq cB$ and $A\sim B$ if $A\leq cB$ and $B\leq cA$. Also, we
use the notation $a+$ to denote $a+\epsilon$ for $0< \epsilon \ll 1$.



In sequel we record some linear and nonlinear estimates satisfied by the solution in  $X_{s,b}$ and $X^{\alpha}_{s,b}$ spaces. 

We define a cut-off function $\psi_1 \in C^{\infty}(\R;\; \R^+)$ which is even, such that $0\leq \psi_1\leq 1$ and
\begin{equation*}\label{cut-1}
\psi_1(t) = \begin{cases} 1, \quad |t|\leq 1,\\
                          0, \quad |t|\geq 2.
            \end{cases}
\end{equation*}
We also define $\psi_T(t) = \psi_1(t/T)$, for $0< T\leq 1$.

In what follows we list some estimates that are crucial in the proof of the local well-posedness result.
\begin{lemma}\label{lemma1}
For any $s, b \in \R$, we have
\begin{equation}\label{lin.1}
\|\psi_1U(t)\phi\|_{X_{s,b}}\leq C \|\phi\|_{H^s}, \qquad \|\psi_1U^{\alpha}(t)\phi\|_{X^{\alpha}_{s,b}}\leq C \|\phi\|_{H^s}.
\end{equation}
Further, if  $-\frac12<b'\leq 0\leq b<b'+1$ and $0\leq \delta\leq 1$, then
\begin{equation}\label{nlin.1}
\|\psi_{\delta}\int_0^tU(t-t')f(u(t'))dt'\|_{X_{s,b}}\lesssim \delta^{1-b+b'}\|f(u)\|_{X_{s, b'}}
\end{equation}
and
\begin{equation}\label{nlin.11}
\|\psi_{\delta}\int_0^tU^{\alpha}(t-t')f(u(t'))dt'\|_{X^{\alpha}_{s,b}}\lesssim \delta^{1-b+b'}\|f(u)\|_{X^{\alpha}_{s, b'}}.
\end{equation}
\end{lemma}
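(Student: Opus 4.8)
The plan is to treat the two groups $U(t)$ and $U^\alpha(t)$ in exactly the same way, since the only difference is the dispersion relation in the definition of the $X_{s,b}$-weight, and none of the arguments below are sensitive to which cubic we restrict to; so I would prove the statements for $U(t)$ and $X_{s,b}$ and remark that the $U^\alpha$, $X^\alpha_{s,b}$ versions follow verbatim. The estimates \eqref{lin.1}, \eqref{nlin.1}, \eqref{nlin.11} are the standard Bourgain-space linear and Duhamel estimates (see Bourgain \cite{B-93}, Kenig--Ponce--Vega \cite{KPV3}, Tao \cite{Tao}), so the proof is essentially a matter of recording the classical argument.

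For the homogeneous estimate \eqref{lin.1}, I would start from the identity $[U(t)\phi]\,\widetilde{}\,(\xi,\tau)$ being supported on $\tau=\xi^3$, so that the space-time Fourier transform of $\psi_1(t)U(t)\phi$ is $\widehat{\psi_1}(\tau-\xi^3)\widehat{\phi}(\xi)$. Plugging this into the definition \eqref{xsb}, one gets
\begin{equation*}
\|\psi_1 U(t)\phi\|_{X_{s,b}}^2=\int_{\R}\langle\xi\rangle^{2s}|\widehat{\phi}(\xi)|^2\Big(\int_{\R}\langle\tau-\xi^3\rangle^{2b}|\widehat{\psi_1}(\tau-\xi^3)|^2\,d\tau\Big)d\xi,
\end{equation*}
and the inner integral, after the change of variables $\sigma=\tau-\xi^3$, equals $\int_{\R}\langle\sigma\rangle^{2b}|\widehat{\psi_1}(\sigma)|^2\,d\sigma=\|\psi_1\|_{H^b}^2<\infty$ because $\psi_1$ is Schwartz; this gives \eqref{lin.1} with $C=\|\psi_1\|_{H^b}$.

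For the Duhamel estimates \eqref{nlin.1}--\eqref{nlin.11}, I would reduce to a one-dimensional-in-time lemma. Writing $f(u)$ in terms of its space-time Fourier transform and conjugating by $U(t)$, the operator $u\mapsto \psi_\delta(t)\int_0^t U(t-t')f(u(t'))\,dt'$ becomes, on the Fourier side, a fixed multiplier in $\xi$ acting on the scalar function $t\mapsto \int_0^t e^{it'\sigma}g(t')\,dt'$ with $\sigma=\tau-\xi^3$; so it suffices to prove the scalar estimate $\big\|\psi_\delta(t)\int_0^t h(t')\,dt'\big\|_{H^b_t}\lesssim\delta^{1-b+b'}\|h\|_{H^{b'}_t}$ for $-\tfrac12<b'\le 0\le b<b'+1$ (after absorbing the oscillation $e^{it'\sigma}$ by the usual translation-invariance of these norms), together with control of the contribution where $\widehat h$ is near zero frequency; this is precisely Lemma 2.1 of Ginibre--Tsutsumi--Velo / the version in \cite{KPV3, Tao}. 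The $\delta$-power comes from splitting $\psi_\delta\int_0^t$ into a ``high modulation'' piece treated by the boundedness of the Hilbert-transform-type multiplier and a ``low modulation'' Taylor-expansion piece, each scaling like $\delta^{1-b+b'}$; I would simply cite this and indicate the two-piece decomposition rather than reproducing it.

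The only point requiring genuine care — and the step I expect to be the main (minor) obstacle — is the bookkeeping for the restricted range $-\tfrac12<b'\le0\le b<b'+1$: one must check that the negative value of $b'$ does not destroy the integrability in the high-modulation estimate (this is exactly why $b'>-\tfrac12$ is imposed) and that the gap condition $b<b'+1$ leaves a positive power $1-b+b'$ of $\delta$. Everything else is routine Fourier-support manipulation, and the $\alpha$-versions are obtained by replacing $\xi^3$ with $\alpha\xi^3$ throughout, which changes nothing.
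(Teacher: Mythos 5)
Your proposal is correct and follows exactly the standard Ginibre--Tsutsumi--Velo argument that the paper itself invokes (its proof of this lemma is simply a citation to \cite{GTV}): the homogeneous bound via the identity $[\psi_1 U(t)\phi]\,\widetilde{}\,(\xi,\tau)=\widehat{\psi_1}(\tau-\xi^3)\widehat{\phi}(\xi)$, and the Duhamel bound via reduction to the scalar-in-time estimate with the high/low modulation splitting producing the factor $\delta^{1-b+b'}$. Nothing further is needed.
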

\begin{proof}
For the proof of this lemma we refer to \cite{GTV}.
\end{proof}

\begin{remark} In the proof of the local well-posedness results, we will take $b'=-\frac12+2\epsilon$ and $b=\frac12+\epsilon$ so that $1-b+b'$ is strictly positive.
\end{remark}

Now, we state the trilinear estimate which is central in our argument. 

\begin{proposition}\label{prop1}
Let $0<\alpha<1$,  $b>\frac12$, and $b'$ be as in Lemma \ref{lemma1}. Then  the following trilinear estimates 
\begin{equation}\label{tlint}
\|(vw^2)_x\|_{X_{s,b'}}\lesssim  \|v\|_{X_{s,b}} \|w\|_{X^{\alpha}_{s,b}}^2,
\end{equation}
and
\begin{equation}\label{tlin-2t}
\|(v^2w)_x\|_{X^{\alpha}_{s,b'}}\lesssim  \|v\|_{X_{s,b}}^2\|w\|_{X^{\alpha}_{s,b}},
\end{equation}
hold for any  $s>- \frac12$.
\end{proposition}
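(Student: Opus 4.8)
The plan is to reduce both trilinear estimates by duality to a single weighted $L^2$ convolution estimate over the frequency-modulation variables and then to exploit the algebraic structure of the three resonance functions $\sigma_0=\tau-\xi^3$ (or $\tau-\alpha\xi^3$) and $\sigma_j=\tau_j-\xi_j^3$, $\sigma_j=\tau_j-\alpha\xi_j^3$ for the three inputs. Concretely, for \eqref{tlint} I would write $v,w$ via their Fourier transforms, set $f(\xi,\tau)=\langle\tau-\xi^3\rangle^{b}\langle\xi\rangle^{s}\widetilde v$, $g_k(\xi,\tau)=\langle\tau-\alpha\xi^3\rangle^{b}\langle\xi\rangle^{s}\widetilde{w}_k$ ($k=1,2$), pair $(vw^2)_x$ against a test function $h\in L^2_{\xi\tau}$, and arrive at an inequality of the form
\begin{equation*}
\Big|\int_{*} \frac{|\xi|\,\langle\xi\rangle^{s}\,h(\xi,\tau)\,f(\xi_1,\tau_1)\,g_1(\xi_2,\tau_2)\,g_2(\xi_3,\tau_3)}{\langle\sigma_0\rangle^{|b'|}\langle\xi_1\rangle^{s}\langle\sigma_1\rangle^{b}\langle\xi_2\rangle^{s}\langle\sigma_2\rangle^{b}\langle\xi_3\rangle^{s}\langle\sigma_3\rangle^{b}}\Big|\lesssim \|h\|_{2}\|f\|_{2}\|g_1\|_2\|g_2\|_2,
\end{equation*}
where $*$ denotes the hyperplane $\xi=\xi_1+\xi_2+\xi_3$, $\tau=\tau_1+\tau_2+\tau_3$, and $\sigma_0=\tau-\xi^3$. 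The estimate \eqref{tlin-2t} is handled identically after swapping the roles of the $U$- and $U^{\alpha}$-weights and using the elementary symmetrization $|\xi|\langle\xi\rangle^{s}\lesssim \langle\xi_1\rangle^{|s|}\langle\xi_2\rangle^{|s|}\langle\xi_3\rangle^{|s|}\cdot(\text{gain})$ only after the resonance analysis; so it suffices to treat one of them carefully.

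The heart of the matter is the resonance identity. On the support of the integral one has
\begin{equation*}
\sigma_0-\sigma_1-\sigma_2-\sigma_3 = (\xi_1^3+\xi_2^3+\xi_3^3-\xi^3) + (1-\alpha)(\xi_2^3+\xi_3^3) \quad(\text{for }\eqref{tlint}),
\end{equation*}
so that, using $\xi=\xi_1+\xi_2+\xi_3$, the first parenthesis factors in the usual KdV way and, crucially, the extra term $(1-\alpha)(\xi_2^3+\xi_3^3)$ does \emph{not} cancel against it — this is precisely the "no cancellation" phenomenon advertised before Lemma \ref{lema2}. The consequence is that $\max(\langle\sigma_0\rangle,\langle\sigma_1\rangle,\langle\sigma_2\rangle,\langle\sigma_3\rangle)$ is bounded below by a positive power of the relevant frequencies in a wider range of configurations than when $\alpha=1$; in particular one expects a lower bound roughly like $|\xi_2||\xi_3|(|\xi_2|+|\xi_3|)$ or $|\xi||\xi_{\max}|^2$ up to the degenerate sets where two frequencies nearly cancel. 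I would case-split according to which of the four modulations is the largest, and within each case according to the relative sizes of $|\xi_1|,|\xi_2|,|\xi_3|$ (high-high-low, high-low-low, comparable, plus the near-resonant sets where $\xi_i+\xi_j\approx 0$). In the generic cases the modulation gain absorbs all the $|\xi|$'s and the remaining integral is estimated by Cauchy--Schwarz in $\tau$-variables (using $b>1/2$ to make $\int\langle\sigma\rangle^{-2b}d\tau<\infty$) followed by the standard $L^2$ bilinear/trilinear Strichartz-type convolution bounds, e.g. $\|u_1u_2\|_{L^2_{xt}}\lesssim \|u_1\|_{X^{\pm}_{0,b}}\|u_2\|_{X^{\pm}_{0,b}}$, or the elementary calculus lemma $\int \frac{d\xi_1}{\langle\xi_1\rangle^{2a}\langle\xi-\xi_1\rangle^{2c}}\lesssim \langle\xi\rangle^{-2a-2c+1}$ when $2a+2c>1$. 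I would also need the one-dimensional estimate $\sup_{\mu,\beta}\int \frac{d\xi}{\langle\xi^3+b_1\xi^2+b_2\xi+\mu\rangle^{2b}}<\infty$ coming from the change of variables $\eta=\sigma$-cubic; here is where $0<\alpha<1$ (so the relevant cubic has distinct real structure) enters quantitatively.

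The step I expect to be the main obstacle is controlling the near-resonant regions where one pairwise sum of frequencies, say $\xi_2+\xi_3$, is very small while $|\xi_2|,|\xi_3|$ themselves are large: there the factored KdV term degenerates and one must squeeze the needed decay out of the surviving $(1-\alpha)(\xi_2^3+\xi_3^3)$ piece — but $\xi_2^3+\xi_3^3=(\xi_2+\xi_3)(\xi_2^2-\xi_2\xi_3+\xi_3^2)$ also carries the small factor $\xi_2+\xi_3$, so the modulation lower bound weakens exactly when it is most needed, and it is the competition between this loss and the derivative $|\xi|$ in front that forces the threshold $s>-1/2$ rather than something smaller. Handling this requires a finer sub-division (e.g. Whitney-type decomposition of the set $|\xi_2+\xi_3|\ll |\xi_2|$) and careful bookkeeping of powers of $|\xi_2+\xi_3|=|\xi-\xi_1|$ against $\langle\xi_1\rangle^{-s}$ and $\langle\xi\rangle^{s}$; I would treat the symmetric near-resonances $\xi_1+\xi_2\approx 0$ and $\xi_1+\xi_3\approx 0$ analogously, noting that because inputs $2$ and $3$ carry the $\alpha$-weight while input $1$ does not, these three cases are genuinely different and must be checked separately. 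Once all cases yield a gain strong enough to dominate $|\xi|^{1+2|s|}$ with the three $b$-weights providing square-integrability in the modulation variables, summing the finitely many cases gives Proposition \ref{prop1}; the detailed verification of the near-resonant case is the one I would expect to occupy the bulk of the write-up, and is presumably where Lemma \ref{lema2} does its work.
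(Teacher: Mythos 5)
Your outline identifies the correct starting point (duality, the resonance function, the role of $0<\alpha<1$), but as written it has a genuine gap at exactly the place you yourself flag as the hard part, and that gap is not a matter of bookkeeping. Writing $\xi=\xi_1+\xi_2+\xi_3$, the resonance function for \eqref{tlint} is
\begin{equation*}
\sigma_0-\sigma_1-\sigma_2-\sigma_3=-(\xi_2+\xi_3)\Bigl[3(\xi_1+\xi_2)(\xi_1+\xi_3)+(1-\alpha)\bigl(\xi_2^2-\xi_2\xi_3+\xi_3^2\bigr)\Bigr],
\end{equation*}
which vanishes identically on the whole codimension-one set $\xi_2+\xi_3=0$, for every $\alpha$. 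On a neighbourhood $|\xi_2+\xi_3|\ll 1$ with $|\xi_2|\sim|\xi_3|\sim N$ large, all four modulations can simultaneously be $O(1)$ while the symbol is of size $|\xi|\,N^{2|s|}$; there is no modulation gain whatsoever to ``absorb all the $|\xi|$'s,'' so the strategy of case-splitting on the largest modulation and invoking bilinear Strichartz bounds simply does not apply there. The gain must instead come from the smallness in measure of the near-resonant frequency set (or an equivalent mechanism), and your proposal defers precisely this computation (``Whitney-type decomposition and careful bookkeeping'') without carrying it out. Since the threshold $s>-\tfrac12$ is decided in this region, the proposal does not yet constitute a proof of Proposition \ref{prop1}.

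For comparison, the paper sidesteps the trilinear resonant set entirely. After dualizing, it distributes the derivative via $|\xi_4|\la\xi_4\ra^{s}\lesssim\la\xi_4\ra^{1/2}\sum_{j=1}^{3}\la\xi_j\ra^{s+1/2}$ (see \eqref{xi-4}, which already forces $s+\tfrac12\geq 0$), and then uses Tao's comparison/composition lemmas to factor the resulting $[4;\R^2]$-multiplier into a product of two $[3;\R^2]$-multipliers. Each factor is equivalent to a \emph{derivative-free} bilinear $L^2$ estimate mixing exactly one KdV weight and one $\alpha$-KdV weight, namely \eqref{bil-1.1} and \eqref{bil-2.1}. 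In those bilinear estimates the relevant phase is $H(\xi_1)=\tau-\xi_2^3-\alpha\xi_1^3$, whose derivative $H'(\xi_1)=3(1-\alpha)\xi_1^2+3\xi^2-6\xi\xi_1$ is nondegenerate for $0<\alpha<1$ outside explicitly parametrized regions, and Lemma \ref{lema2} handles everything by one-dimensional changes of variables and the calculus inequalities of Lemma \ref{lemxm1}. The dangerous configuration $\xi_2\approx-\xi_3$ is split across the two bilinear factors and never appears inside a single estimate. If you want to pursue your direct route, you must either supply the measure-theoretic argument on the set $|\xi_2+\xi_3|\ll\min(|\xi_2|,|\xi_3|)^{-2}$ (and likewise near the secondary degeneracy $\xi_1^2\approx\alpha\xi_2^2$), or adopt a factorization of the quadrilinear form of the type the paper uses.
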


Proof of this proposition will be provided below in a separate section.

\secao{Proofs of the key trilinear estimates and local well-posedness result}\label{local-r}
In this section we prove the crucial tri-linear estimates stated in Proposition \ref{prop1} and the local well-posedness result stated in Theorem \ref{loc-sys}.
\subsection{Proof of the trilinear estimate}

  We start by  defining $n$-multiplier and $n$-linear functional, see \cite{Tao}.
Let $n\geq 2$ be an even integer. An $n$-multiplier $M_n(\xi_1, \dots, \xi_n)$ is a function defined on the hyper-plane $\Gamma_n:= \{(\xi_1, \dots, \xi_n);\;\xi_1+\dots +\xi_n =0\}$ with Dirac delta $\delta(\xi_1+\cdots +\xi_n)$ as a measure.

We define an $[n; \R^{n+1}]$-multiplier to be any function $m:\Gamma_n(\R^{n+1})\to \C$, and its norm $\|m\|_{[n; \R^{n+1}]}$ to be the best constant such that
\begin{equation}
\label{m-norm}
\Big|\int_{\Gamma_n}m(\xi)\Pi_{j=1}^nf_j(\xi)\Big|\leq \|m\|_{[n; \R^{n+1}]}\Pi_{j=1}^n\|f_j\|_{L^2(\R^{n+1})}
\end{equation}
holds true for all the test functions in $\R^{n+1}$.

If $M_n$ is an $n$-multiplier and $f_1, \dots, f_n$ are functions on $\R$, we define an $n$-linear functional, as
\begin{equation}\label{n-linear}
\Lambda_n(M_n;\; f_1, \dots, f_n):= \int_{\Gamma_n}M_n(\xi_1, \dots, \xi_n)\prod_{j=1}^{n}\hat{f_j}(\xi_j).
\end{equation}
We write $\Lambda_n(M_n):=\Lambda_n(M_n;\; f,f,\dots,f)$ in the case when $\Lambda_n$ is applied to the $n$ copies of the same function $f$.

The following results will  be useful in our argument. 

\begin{lemma}\label{lemxm1}
\begin{itemize}
\item[(i)] If $a,b>0$ and $a+b>1$, we have
\begin{equation}\label{lfc1-1}
\int_{\R} \dfrac{dx}{\la x -\alpha \ra^{a}\la x -\beta \ra^{b}} \lesssim \dfrac{1}{\la \alpha -\beta \ra^{c}},  \quad c=\min\{a,b, a+b-1\}.
\end{equation}

\item[(ii)]
Let $a, \eta \in \R$, $a, \eta  \neq 0$, $b>1$, then
\begin{equation}\label{xlfc1-2}
\int_{\R} \dfrac{dx}{\la a(x^2-\eta^2) \ra^{b}}  \lesssim \dfrac{1}{|a\eta|},
\end{equation}

\item[(iii)]
Let $a, \eta \in \R$, $a, \eta  \neq 0$, $b>1$, then
\begin{equation}\label{x1lfc1-2}
\int_{\R} \dfrac{|x\pm \eta|\,dx}{\la a(x^2\pm\eta^2) \ra^{b}} \lesssim \dfrac{1}{|a|},
\end{equation}

\item[(iv)] For $l>1/3$,
\begin{equation}\label{x2lfc1-2}
\int_{\R} \dfrac{dx}{\la x^3+a_2x^2 +a_1 x+a_0 \ra^{l}} \lesssim 1.
\end{equation}

\end{itemize}
\end{lemma}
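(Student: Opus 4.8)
The plan is to prove the four bounds separately, since each reduces to an elementary one‑variable integral; (i) and (iv) are essentially textbook, while (ii)--(iii) require some care. Part (i) is the classical convolution estimate for the weights $\langle\cdot\rangle^{-a}$ and $\langle\cdot\rangle^{-b}$. I would split $\R$ into $R_1=\{|x-\alpha|\le\tfrac12\langle\alpha-\beta\rangle\}$, $R_2=\{|x-\beta|\le\tfrac12\langle\alpha-\beta\rangle\}$ and $R_3=\R\setminus(R_1\cup R_2)$. On $R_1$ one has $\langle x-\beta\rangle\gtrsim\langle\alpha-\beta\rangle$, so its contribution is at most $\langle\alpha-\beta\rangle^{-b}\int_{R_1}\langle x-\alpha\rangle^{-a}\,dx$, which is $\lesssim\langle\alpha-\beta\rangle^{-b}$ if $a>1$ and $\lesssim\langle\alpha-\beta\rangle^{1-a-b}$ if $a<1$; the region $R_2$ is symmetric under $a\leftrightarrow b$; and on $R_3$ the two brackets are comparable to each other and $\gtrsim\langle\alpha-\beta\rangle$, so that piece is $\lesssim\langle\alpha-\beta\rangle^{-(a+b-1)}$ using $a+b>1$. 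Taking the worst of the three exponents gives $c=\min\{a,b,a+b-1\}$.

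For parts (ii) and (iii) the denominator is controlled by the quadratic $x^2\pm\eta^2$, and the natural move is to change variables by $y=x^2\mp\eta^2$ on each half-line $\{x>0\}$ and $\{x<0\}$ separately, so that $dx=\dfrac{dy}{2\sqrt{\,y\pm\eta^2\,}}$. The only singular point of the new weight is the turning point $y=\mp\eta^2$, i.e.\ $x=\pm\eta$; there the singularity $(y\pm\eta^2)^{-1/2}$ is integrable, and localising near it produces exactly the factor $|\eta|^{-1}$ on the right of \eqref{xlfc1-2}, while for \eqref{x1lfc1-2} one writes $|x\mp\eta|=|y|/(x\pm\eta)$, so that this extra factor cancels $|\eta|^{-1}$ and leaves $|a|^{-1}$. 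Away from the turning points I would simply use $\langle a(x^2\pm\eta^2)\rangle\gtrsim\langle ax^2\rangle$ together with $\int_\R\langle ax^2\rangle^{-b}\,dx\lesssim|a|^{-1/2}$ (valid for $b>\tfrac12$) and part (i). The main nuisance, as I see it, is the bookkeeping at the turning point: there the substitution degenerates, and one must split according to whether $|a|\eta^2\gtrsim1$ or $|a|\eta^2\lesssim1$ and balance the surviving powers of $|\eta|$ against powers of $|a|$. In the $(-)$ case of \eqref{x1lfc1-2} the key point is that, near $x=\eta$, the $y$-integral is confined to $|y|\lesssim|a|\eta^2$; this truncation is precisely what saves the estimate, since $\int|v|^{1-b}\,dv$ is only borderline convergent when $b$ is close to $1$.

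Finally, part (iv) is a sublevel-set estimate, and the structural input is that $\dfrac{d^3}{dx^3}\big(x^3+a_2x^2+a_1x+a_0\big)\equiv6$, independently of the coefficients. Hence the standard van der Corput sublevel-set lemma --- if $|\phi^{(k)}|\ge c_0$ on an interval then $\big|\{x:|\phi(x)|\le\lambda\}\big|\lesssim_k(\lambda/c_0)^{1/k}$ --- applies with $k=3$ and gives $\big|\{x\in\R:\langle P(x)\rangle\le2^{j}\}\big|\lesssim2^{j/3}$ with an absolute constant. Decomposing $\R$ according to the dyadic size of $\langle P(x)\rangle$ then yields $\int_\R\langle P(x)\rangle^{-l}\,dx\lesssim\sum_{j\ge0}2^{-jl}2^{j/3}=\sum_{j\ge0}2^{j(1/3-l)}$, which converges precisely for $l>1/3$. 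Alternatively one could cut $\R$ at the at most two real roots of $P'$ into monotone branches and substitute $u=P(x)$ on each, but the sublevel-set route makes the uniformity in $(a_0,a_1,a_2)$ most transparent.
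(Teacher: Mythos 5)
Your proposal is essentially correct, but it is not comparable to ``the paper's proof'' in the usual sense: the paper does not prove Lemma \ref{lemxm1} at all, it simply cites \cite{TAKA3} for \eqref{lfc1-1}, \cite{XC-04} for \eqref{xlfc1-2}--\eqref{x1lfc1-2}, and \cite{BOP} for \eqref{x2lfc1-2}. What you supply is a self-contained argument, and each of your four routes is sound. For (i), the three-region decomposition is the standard one and gives exactly $c=\min\{a,b,a+b-1\}$; note only that the borderline cases $a=1$, $b=1$ or $a+b=2$ produce logarithmic losses that the statement (and your case split ``$a>1$ or $a<1$'') silently ignores --- harmless here, since the lemma is applied with $a=1+2\epsilon$, $b=1-4\epsilon$. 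For (iv), the van der Corput sublevel-set route is genuinely cleaner than the monotone-branch substitution used in \cite{BOP}, because $P'''\equiv 6$ makes the bound manifestly uniform in $(a_0,a_1,a_2)$, whereas substituting $u=P(x)$ on monotone branches forces one to deal separately with neighbourhoods of the critical points of $P$.

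The one place you should tighten the write-up is (ii)--(iii). In the substitution $y=x^2-\eta^2$ the map degenerates at the turning point $x=0$, which corresponds to $y=-\eta^2$, \emph{not} to $x=\pm\eta$; the points $x=\pm\eta$ correspond to $y=0$, where the Jacobian factor $(2\sqrt{y+\eta^2}\,)^{-1}\sim|\eta|^{-1}$ is perfectly regular, and it is this factor, combined with $\int_{\R}\langle ay\rangle^{-b}\,dy\lesssim|a|^{-1}$, that produces the $|a\eta|^{-1}$ in \eqref{xlfc1-2}. Your sentence conflates the two points. Both must be handled (near $y=-\eta^2$ the integrable square-root singularity meets the constant weight $\langle a\eta^2\rangle^{-b}$, and the resulting $|\eta|\langle a\eta^2\rangle^{-b}$ is indeed $\lesssim|a\eta|^{-1}$ for $b\ge 1$), so no step actually fails, but as written the bookkeeping points at the wrong location. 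Likewise, the crude bound $\int_{\R}\langle ax^2\rangle^{-b}\,dx\lesssim|a|^{-1/2}$ is insufficient when $|a|\eta^2\gg 1$ (then $|a|^{-1/2}\gg|a\eta|^{-1}$); in that regime you need the sharper tail estimate $\int_{|x|\ge 2|\eta|}(ax^2)^{-b}\,dx\lesssim|a|^{-1}(|a|\eta^2)^{1-b}\le|a|^{-1}$, which your announced case split on $|a|\eta^2\gtrless 1$ would presumably supply. With these corrections the argument closes, and for \eqref{x1lfc1-2} your observation that $|x-\eta|=|y|/|x+\eta|$ cancels the $|\eta|^{-1}$ and that the $v$-integral is truncated at $|v|\lesssim|a|\eta^2$ is exactly the right mechanism.
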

\begin{proof}
Proof of \eqref{lfc1-1}  can be found in \cite{TAKA3},  \eqref{xlfc1-2} and \eqref{x1lfc1-2} in \cite{XC-04} and \eqref{x2lfc1-2}  in  \cite{BOP}.
\end{proof}

Now we prove some bilinear estimates that play central role in the proof of the trilinear estimates.
\begin{lemma}\label{lema2}
 Let $s>- \frac12$, $0<\epsilon<\frac{2s+1}{15}$  and $0<\alpha<1$, then the following bilinear estimates
\begin{equation}\label{bil-1.1}
\|uv\|_{L^2(\R^2)}\lesssim \|u\|_{X_{-\frac12, \frac12-2\epsilon}}\|v\|_{X^{\alpha}_{s, \frac12+\epsilon}},
\end{equation}
and
\begin{equation}\label{bil-2.1}
\|uv\|_{L^2(\R^2)}\lesssim \|u\|_{X^{\alpha}_{-\frac12, \frac12-2\epsilon}}\|v\|_{X_{s, \frac12+\epsilon}}
\end{equation}
hold true.
\end{lemma}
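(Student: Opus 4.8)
The plan is to reduce both estimates to a single weighted convolution estimate via duality, and then to exploit the fact that the two dispersion relations $\tau=\xi^3$ and $\tau=\alpha\xi^3$ are genuinely different when $0<\alpha<1$. By symmetry of the two inequalities (swapping the roles of $U(t)$ and $U^\alpha(t)$), it suffices to prove \eqref{bil-1.1}; the estimate \eqref{bil-2.1} follows by the identical argument with $\alpha$ and $1$ interchanged. Writing $f(\xi,\tau)=\langle\tau-\xi^3\rangle^{1/2-2\epsilon}\langle\xi\rangle^{-1/2}\widetilde{u}(\xi,\tau)$ and $g(\xi,\tau)=\langle\tau-\alpha\xi^3\rangle^{1/2+\epsilon}\langle\xi\rangle^{s}\widetilde{v}(\xi,\tau)$, both nonnegative in $L^2(\R^2)$, the claim \eqref{bil-1.1} is equivalent by Plancherel and duality to the bound
\begin{equation*}
\left|\int_{*}\frac{h(\xi,\tau)\,f(\xi_1,\tau_1)\,g(\xi_2,\tau_2)}{\langle\xi_1\rangle^{-1/2}\langle\xi_2\rangle^{s}\langle\tau_1-\xi_1^3\rangle^{1/2-2\epsilon}\langle\tau_2-\alpha\xi_2^3\rangle^{1/2+\epsilon}}\right|\lesssim \|h\|_{L^2}\|f\|_{L^2}\|g\|_{L^2},
\end{equation*}
where $\int_{*}$ denotes integration over $\xi=\xi_1+\xi_2$, $\tau=\tau_1+\tau_2$. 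After applying Cauchy–Schwarz in $(\xi_1,\tau_1,\xi_2,\tau_2)$ against the $L^2$ norms of $f$ and $g$, the matter reduces to showing that
\begin{equation*}
\sup_{\xi,\tau}\ \langle\xi_1\rangle\langle\xi_2\rangle^{-2s}\int \frac{d\xi_1\,d\tau_1}{\langle\tau_1-\xi_1^3\rangle^{1-4\epsilon}\langle\tau-\tau_1-\alpha\xi_2^3\rangle^{1+2\epsilon}}
\end{equation*}
is finite, with $\xi_2=\xi-\xi_1$.

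The next step is to carry out the $\tau_1$-integration first using \eqref{lfc1-1} with $a=1-4\epsilon$, $b=1+2\epsilon$ (so $a+b>1$ for $\epsilon$ small), which produces a gain of $\langle\sigma\rangle^{-(1-4\epsilon)}$ where $\sigma:=(\tau-\alpha\xi_2^3)-\xi_1^3$ is the resonance function; substituting $\tau$ back in terms of the outer variables one finds $\sigma = \tau - \xi_1^3 - \alpha\xi_2^3$, and on the relevant region (where $\widetilde{uv}$ is effectively supported) this is comparable to the algebraic resonance $\xi^3-\xi_1^3-\alpha\xi_2^3$ up to the modulation variables already absorbed. The key algebraic point — and this is where $0<\alpha<1$ is used essentially — is the lower bound for the resonance function: writing $\xi=\xi_1+\xi_2$,
\begin{equation*}
\xi^3-\xi_1^3-\alpha\xi_2^3 = 3\xi_1\xi_2(\xi_1+\xi_2) + (1-\alpha)\xi_2^3,
\end{equation*}
and because $1-\alpha>0$ there is \emph{no cancellation}: this quantity is bounded below by a positive power of the largest frequency whenever the frequencies are comparable and large. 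One then splits into the usual frequency regimes — low–high, high–low, and high–high interactions — and in each regime uses the resonance lower bound to convert the remaining $\langle\sigma\rangle^{-(1-4\epsilon)}$ weight into enough powers of the frequencies to beat $\langle\xi_1\rangle\langle\xi_2\rangle^{-2s}$, after a final $\xi_1$-integration carried out via \eqref{x1lfc1-2} or \eqref{x2lfc1-2} (a change of variables $\xi_1\mapsto$ the cubic $\xi^3-\xi_1^3-\alpha\xi_2^3$ makes \eqref{x2lfc1-2} directly applicable, since that cubic has leading coefficient $1-\alpha\ne 0$ as a polynomial in $\xi_1$ — wait, in fact it is quadratic in $\xi_1$ for fixed $\xi$, so \eqref{x1lfc1-2} is the relevant tool after completing the square).

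The main obstacle I anticipate is the high–high to low interaction, i.e., $|\xi_1|\sim|\xi_2|\gg|\xi|$: here the "worst" term $\langle\xi_1\rangle\langle\xi_2\rangle^{-2s}$ is as large as $\langle\xi_1\rangle^{1-2s}$, and one must extract a matching power from the resonance. When $s<0$ this demands a genuinely large gain, and it is precisely the term $(1-\alpha)\xi_2^3$ that saves the day: it forces $|\xi^3-\xi_1^3-\alpha\xi_2^3|\gtrsim |\xi_1|^3$ in this regime, which after the $\tau_1$ and $\xi_1$ integrations yields a net gain of roughly $|\xi_1|^{3(1-4\epsilon)-1}$, comfortably more than $|\xi_1|^{1-2s}$ once $s>-\tfrac12$ and $\epsilon$ is taken small enough — this is exactly where the hypothesis $0<\epsilon<\frac{2s+1}{15}$ enters, to keep all the $\epsilon$-losses strictly under control. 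In the remaining regimes the analysis is softer: when one frequency is $O(1)$ the weights $\langle\xi_j\rangle$ are harmless, and one only needs the crude bound from \eqref{lfc1-1} together with \eqref{x2lfc1-2} to close. I would organize the proof as: (1) duality reduction; (2) the resonance identity and its case-by-case lower bounds; (3) the $\tau_1$-integration via \eqref{lfc1-1}; (4) the $\xi_1$-integration via \eqref{x1lfc1-2}–\eqref{x2lfc1-2} in each frequency regime; (5) bookkeeping of the $\epsilon$ powers.
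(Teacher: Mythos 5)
Your opening reductions coincide with the paper's: Plancherel plus Cauchy--Schwarz, followed by the $\tau_1$-integration via \eqref{lfc1-1}, which leaves $\sup_{\xi,\tau}\int\langle\xi_2\rangle\langle\xi_1\rangle^{-2s}\langle\tau-\xi_2^3-\alpha\xi_1^3\rangle^{-(1-4\epsilon)}\,d\xi_1$ to be bounded. The gap is in what you do next. You replace $\tau-\xi_1^3-\alpha\xi_2^3$ by the algebraic resonance $\xi^3-\xi_1^3-\alpha\xi_2^3$ ``on the relevant region,'' but the left-hand side $\|uv\|_{L^2}$ carries no output modulation weight, so $\tau$ is a genuinely free variable over which the supremum is taken: there is no third relation forcing $\tau\approx\xi^3$, and for the worst choice of $\tau$ the function $H(\xi_1):=\tau-\xi_2^3-\alpha\xi_1^3$ vanishes precisely at points where the numerator $\langle\xi_2\rangle\langle\xi_1\rangle^{-2s}$ is as large as $\langle\xi_1\rangle^{1-2s}$ (close to $\langle\xi_1\rangle^{2}$ for $s$ near $-\tfrac12$). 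Consequently the pointwise lower bound $|H|\gtrsim|\xi_1|^3$ on which your ``main obstacle'' paragraph rests is unavailable. Moreover, even the algebraic resonance $3\xi_1\xi_2\xi+(1-\alpha)\xi_2^3=\xi_2^3\bigl(3x^2+3x+1-\alpha\bigr)$ with $x=\xi_1/\xi_2$ vanishes with all three frequencies large and comparable as soon as $\alpha\ge\tfrac14$, so the claim that it is bounded below ``whenever the frequencies are comparable and large'' is false on its own terms.

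What actually closes the estimate --- and what is missing from your outline --- is the study of $H$ as a cubic in $\xi_1$ with leading coefficient $1-\alpha\neq0$ and derivative $H'(\xi_1)=3(1-\alpha)\xi_1^2-6\xi\xi_1+3\xi^2$. In the regimes where $H'\gtrsim(1-\alpha)\xi_1^2$ or $H'\gtrsim\lambda\xi^2$ (these include your high--high-to-low interaction, which is therefore one of the \emph{easy} cases) one changes variables $x=H(\xi_1)$ on each interval of monotonicity, and the Jacobian $1/H'$ absorbs the growing numerator at the cost of the $\epsilon$-bookkeeping that produces the hypothesis $\epsilon<\frac{2s+1}{15}$. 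The genuinely delicate regime is $|\xi_1|\sim|\xi|$, where $H'$ vanishes at $\xi_1=(\xi\pm|\xi|\sqrt{\alpha})/(1-\alpha)$ and $H$ degenerates quadratically; the paper spends the bulk of the proof there (Case d), with successive changes of variables reducing the integral to $|\xi|\int(1+|\xi|^3|t^2+\kappa_1t+\alpha^3|)^{-(1+\epsilon)}|t^{2/3}-\alpha|\,dt$ and a case analysis on the discriminant $\kappa_1^2-4\alpha^3$ using \eqref{xlfc1-2}--\eqref{x1lfc1-2}. Your proposal does not address this critical-point regime at all. Two smaller points: the relevant phase is cubic, not quadratic, in $\xi_1$ (your second guess is the wrong one), and \eqref{x2lfc1-2} by itself yields only an $O(1)$ bound with no decay in the frequency, so it cannot beat the numerator $\langle\xi_2\rangle\langle\xi_1\rangle^{-2s}$ when $s<0$. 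As it stands the argument does not close.
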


\begin{proof}
We provide proofs for \eqref{bil-1.1}, the proof of the bilinear estimate \eqref{bil-2.1} is similar.  Using Plancherel's identity, 
the estimate \eqref{bil-1.1} is equivalent to showing that
\begin{equation}\label{bil-3xc}
\| \mathcal{B}_s(f,g) \|_{L^2_\xi L^2_\tau} \le C \|f\|_{L^2(\R^2)} \|g\|_{L^2(\R^2)},
\end{equation}
where 
\begin{equation}\label{bil-f1}
\mathcal{B}_s(f,g)= \int_{\R^2} \dfrac{\la \xi_2 \ra^{1/2}\widetilde{f}(\xi_2, \tau_2)\widetilde{g}(\xi_1, \tau_1) }{\langle \xi_1 \rangle^s\la \tau_1-\alpha \xi_1^3 \ra^{\frac12+\epsilon} \la \tau_2-\xi_2^3 \ra^{\frac12-2\epsilon} } d\xi_1 d\tau_1, 
\end{equation}
with   $\widetilde{f}(\xi,\tau)=\la\xi\ra^{-\frac12}\la\tau-\xi^3\ra^{\frac12-2\epsilon} \widetilde{u}(\xi,\tau)$,  $\widetilde{g}(\xi,\tau)=\la\xi\ra^s\la\tau-\alpha\xi^3\ra^{\frac12+\epsilon} \widetilde{v}(\xi,\tau)$, $\xi_2=\xi-\xi_1$ and $\tau_2=\tau-\tau_1$. Using Cauchy-Schwarz  inequality we obtain \eqref{bil-3xc}  if
\begin{equation}\label{definL}
\mathcal{L}_1:=\sup_{\xi,\tau}\int_{\R^2} \dfrac{\la \xi_2 \ra}{\langle \xi_1 \rangle^{2s}\la \tau_1-\alpha \xi_1^3 \ra^{1+2\epsilon} \la \tau_2- \xi_2^3 \ra^{1-4\epsilon} } d\xi_1 d\tau_1 \lesssim 1.
\end{equation}

Applying the estimate \eqref{lfc1-1} for $\tau_1$ integral,  we obtain from \eqref{definL} that
\begin{equation}\label{definL1}
\mathcal{L}_1 \lesssim\sup_{\xi,\tau} \int_{\R} \dfrac{\la \xi_2 \ra}{\langle \xi_1 \rangle^{2s}\la \tau- \xi_2^3-\alpha \xi_1^3 \ra^{1-4\epsilon}  } d\xi_1:=\sup_{\xi,\tau} L_1,
\end{equation}
for $0<\epsilon<\frac12$.

Now, we move to estimate the integral in \eqref{definL1} dividing in four different cases:
\begin{equation*}
|\xi|\leq 1, \qquad 1<|\xi|\leq c_1|\xi_1|, \qquad  c_1|\xi_1|<|\xi|<\frac1{c_2}|\xi_1|, \qquad \frac1{c_2}|\xi_1|\leq|\xi|,
\end{equation*}
where $c_1=-1 +\dfrac{\sqrt{3+(3- \lambda)(1-\alpha)}}{\sqrt3}$ with $0<\lambda<3$ and $c_2=\dfrac{c_1}{1-\alpha}$.\\

\noindent
{\bf Case a) $\boxed{|\xi| \le 1}$. } Considering $c_0:= \dfrac{9}{1-\alpha}$, we further divide in two sub-cases: $$ |\xi_1| \ge c_0, \qquad {\mathrm{and}}\qquad  |\xi_1| < c_0.$$

\noindent
{\bf Subcase a1) $\boxed{|\xi_1| \ge c_0}.$}
 In this case  we have
\begin{equation}\label{finitexi}
 \la \xi_2 \ra \langle \xi_1 \rangle \lesssim \langle \xi_1 \rangle^2 \lesssim 1+ \xi_1^2.
\end{equation}
 For fixed $\xi$ and $\tau$, we define
\begin{equation}\label{H-1}
H(\xi_1)=\tau- \xi_2^3-\alpha \xi_1^3.
\end{equation}

We have 
\begin{equation}\label{1eqx1}
\begin{split}
H'(\xi_1)=&3(1-\alpha)\xi_1^2+3\xi^2-6\xi \xi_1
\geq 3(1-\alpha)\xi_1^2-3-6| \xi_1|
\geq 2(1-\alpha)\xi_1^2.
\end{split}
\end{equation}

The function $H'$ has two roots namely $$r_1=\frac{\xi+|\xi| \sqrt{\alpha} }{1-\alpha},\quad r_2=\frac{\xi-|\xi| \sqrt{\alpha} }{1-\alpha}.$$ 
Thus the function $y=H(\xi_1)$ is monotone on each of intervals: 
\begin{equation}\label{intervals}
(-\infty,  r_2), \quad [r_2 , r_1), \quad \textrm{and}  \quad (r_1, \infty).  
\end{equation}
Thus in this case, using that $2s+1>0$, we get 
\begin{equation}\label{1eqx2}
\begin{split}
L_1=& \int_{|\xi_1|\ge c_0} \dfrac{\la \xi_2 \ra \la \xi_1 \ra}{\langle \xi_1 \rangle^{2s+1}\la \tau- \xi_2^3-\alpha \xi_1^3 \ra^{1-4\epsilon}  } d\xi_1\\
 \lesssim & \int_{|\xi_1|\ge c_0} \dfrac{ 1+ |\xi_1|^2 }{\langle \xi_1 \rangle^{2s+1}\la \tau- \xi_2^3-\alpha \xi_1^3 \ra^{1-4\epsilon}  } d\xi_11\\
 \lesssim & \int_{|\xi_1|\ge c_0} \dfrac{ 1 }{\la \tau- \xi_2^3-\alpha \xi_1^3 \ra^{1-4\epsilon}  } d\xi_1+ \int_{|\xi_1|\ge c_0} \dfrac{ |\xi_1|^2 }{\langle \xi_1 \rangle^{2s+1}\la \tau- \xi_2^3-\alpha \xi_1^3 \ra^{1-4\epsilon}  } d\xi_1\\
 \lesssim & J_1+J_2.
\end{split}
\end{equation}

Using  \eqref{x2lfc1-2} we have  $J_1 \lesssim 1$ provided $0<\epsilon<\frac16$. In what follows, we estimate $J_2$ considering two different cases.

\noindent
{\bf Case a1.1)  $\boxed{\la H(\xi_1)\ra  \lesssim |\xi_1|^3}$.}  In this case, considering  $2s+1 > 0$, we get
\begin{equation}\label{1eqx3}
\begin{split}
J_2
 \lesssim & \int_{|\xi_1|\ge c_0} \dfrac{ H'(\xi_1)  }{| \xi_1 |^{2s+1}\la H(\xi_1) \ra^{1-4\epsilon}  } d\xi_1\\
\lesssim &  \int_{|\xi_1|\ge c_0} \dfrac{ H'(\xi_1) \la H(\xi_1) \ra^{5\epsilon} }{| \xi_1 |^{2s+1}\la H(\xi_1) \ra^{1+\epsilon}  } d\xi_1\\
\lesssim &  \int_{|\xi_1|\ge c_0} \dfrac{ H'(\xi_1) |\xi_1|^{15\epsilon}  }{| \xi_1 |^{2s+1}\la H(\xi_1) \ra^{1+\epsilon}  } d\xi_1.
\end{split}
\end{equation}
Making a  change  of variables of $x=H (\xi_1)$ on each interval of monotonicity of $H(\xi_1)$, for  $0<\epsilon <\frac{2s+1}{15}$, we obtain from \eqref{1eqx3} that
\begin{equation*}
\begin{split}
J_2 \lesssim \int_{\R} \dfrac{ dx}{ \la x\ra^{1+\epsilon}  } \lesssim 1.
\end{split}
\end{equation*}

\noindent
{\bf Case a1.2)    $\boxed{\la H(\xi_1)\ra \gtrsim |\xi_1|^3}$.} In this case, one can obtain
\begin{equation*}
\begin{split}
J_2 \lesssim& \int_{|\xi_1|\ge c_0} \dfrac{| \xi_1|^{2}  }{| \xi_1|^{2s+1} | \xi_1|^{3-12\epsilon}  } d\xi_1
\\
\lesssim & \int_{|\xi_1|\ge c_0} \dfrac{1}{ | \xi_1|^{2s+2-12\epsilon} } d\xi_1
\lesssim  1,
\end{split}
\end{equation*}
where the last inequality we used that $2s+2-12 \epsilon >1$, which  holds for $0<\epsilon <\frac{2s+1}{12}$. This completes the proof of the {\bf Subcase  a1)}.\\

\noindent
{\bf Subcase a2)  $\boxed{|\xi_1| \le c_0}$.}
 Using that $2s+1>0$ and \eqref{x2lfc1-2}, in this case  we get 
\begin{equation}\label{1eqx2-m2}
\begin{split}
L_1=& \int_{|\xi_1|\le c_0} \dfrac{\la \xi_2 \ra \la \xi_1 \ra}{\langle \xi_1 \rangle^{2s+1}\la \tau- \xi_2^3-\alpha \xi_1^3 \ra^{1-4\epsilon}  } d\xi_1\\
 \lesssim & \int_{\R} \dfrac{1 }{\la \tau- \xi_2^3-\alpha \xi_1^3 \ra^{1-4\epsilon}  } d\xi_1
 \lesssim 1,
\end{split}
\end{equation}
provided $0<\epsilon<\frac16$.\\

\noindent
{\bf Case b)   $\boxed{1<|\xi| \leq c_1 |\xi_1|}$.} 
In this case, we have 
\begin{equation}\label{21eqx1-m}
\begin{split}
H'(\xi_1)=&3(1-\alpha)\xi_1^2+3\xi^2-6\xi \xi_1
\geq 3(1-\alpha)\xi_1^2-3\xi^2-6|\xi \xi_1|
\geq \lambda(1-\alpha)\xi_1^2.
\end{split}
\end{equation}
In the last inequality we considered the value of $c_1$ as the positive root of $3c_1^2+6c_1=(3- \lambda)(1-\alpha)$, i.e,   $c_1=-1 +\dfrac{\sqrt{3+(3- \lambda)(1-\alpha)}}{\sqrt3}$, $0<\lambda<3$.  Again, we divide in two sub-cases:  $|\xi_1| \ge 1$ and  $|\xi_1| \leq 1$.
\\

\noindent
{\bf Subcase b1) $\boxed{|\xi_1| \ge 1}$.}
 In this case  we get
\begin{equation}\label{2finitexi-m}
 \la \xi_2 \ra \langle \xi_1 \rangle \lesssim \langle \xi_1 \rangle^2 \lesssim  \xi_1^2.
\end{equation}

Using the inequalities  \eqref{21eqx1-m}, \eqref{2finitexi-m} and similar arguments as in the above {\bf Subcase  a1)} we conclude that $L_1 \lesssim 1$.
\\

\noindent
{\bf Subcase b2) $\boxed{|\xi_1| \le 1}$.}
In this case also $|\xi| \lesssim 1$, and similarly as in the {\bf Subcase  a2)} we have $L_1 \lesssim 1$.\\

\noindent
{\bf Case c)    $\boxed{\frac1{c_2}|\xi_1| \leq |\xi|$,  $c_2=\dfrac{c_1}{1-\alpha}}$.} In this case, 
we have 
\begin{equation}\label{c2finitexi}
  \la \xi_2 \ra \langle \xi_1 \rangle \lesssim  \la \xi \ra \langle \xi_1 \rangle\quad \textrm{and}\quad \la \xi_2 \ra \langle \xi \rangle \lesssim  \la \xi \ra^2 .
\end{equation}
Also
\begin{equation}\label{21eqx1}
\begin{split}
H'(\xi_1)=3(1-\alpha)\xi_1^2+3\xi^2-6\xi \xi_1
\geq 3\xi^2-3(1-\alpha)\xi_1^2-6|\xi \xi_1| \geq \lambda \xi^2,
\end{split}
\end{equation}
where we considered  $c_2$ as the positive root of $3(1-\alpha)c_2^2+6c_2=(3- \lambda)$ in the last inequality. In view of {\bf Case a)}, we can consider  $|\xi| \ge 1$. Here also, considering $b=\dfrac1{1-4\epsilon},\;\; a+b=3$, we divide in two sub-cases.\\

\noindent
{\bf Subcase c1)  $\boxed{\la H(\xi_1)\ra  \lesssim \la \xi_1 \ra^a \la \xi \ra^b}$.}  In this case, considering  $2s+1 > 0$, we get
\begin{equation}\label{c1eqx3}
\begin{split}
L_1=& \int_{\R} \dfrac{\la \xi_2 \ra }{\langle \xi_1 \rangle^{2s}\la \tau- \xi_2^3-\alpha \xi_1^3 \ra^{1-4\epsilon}  } d\xi_1\\
 \lesssim & \int_{\R} \dfrac{  H'(\xi_1)  }{\la \xi \ra \langle \xi_1 \rangle^{2s}\la H(\xi_1) \ra^{1-4\epsilon}  } d\xi_1\\
\lesssim &  \int_{\R} \dfrac{ H'(\xi_1) \la H(\xi_1) \ra^{5\epsilon} }{\la \xi_1 \ra^{2s} \la \xi \ra \la H(\xi_1) \ra^{1+\epsilon}  } d\xi_1\\
\lesssim &  \int_{\R} \dfrac{ H'(\xi_1)  }{\la \xi_1 \ra^{2s-5 a\epsilon} \la \xi \ra^{1-5b \epsilon}\la H(\xi_1) \ra^{1+\epsilon}  } d\xi_1\\
\lesssim &  \int_{\R} \dfrac{ H'(\xi_1)  }{\la  \xi_1 \ra^{2s+1-5 (a+b)\epsilon}\la H(\xi_1) \ra^{1+\epsilon}  } d\xi_1.
\end{split}
\end{equation}

Recall  that $a+b=3$,  taking $0<\epsilon < \dfrac{2s+1}{15}$ and  making a  change  of variables of $x=H (\xi_1)$ on each interval of monotonicity of $H(\xi_1)$, we obtain from \eqref{c1eqx3} that
\begin{equation*}
\begin{split}
L_1 \lesssim \int_{\R} \dfrac{ dx}{ \la x\ra^{1+\epsilon}  } \lesssim 1.
\end{split}
\end{equation*}

\noindent
{\bf Subcase c2)    $\boxed{\la H(\xi_1)\ra \gtrsim \la \xi_1 \ra^a \la \xi \ra^b}$.} In this case, one can obtain
\begin{equation*}
\begin{split}
L_1 \lesssim& \int_{\R} \dfrac{\la  \xi_1\ra \la \xi \ra  }{\la  \xi_1\ra^{2s+1} \la \xi_1\ra^{a(1-4 \epsilon)} \la \xi \ra^{b(1-4 \epsilon)}  } d\xi_1\\
= & \int_{\R} \dfrac{\la  \xi_1\ra \la \xi \ra  }{\la \xi_1\ra^{2s+1} \la \xi_1\ra^{2-12\epsilon}\la \xi\ra  } d\xi_1\\
= & \int_{\R} \dfrac{1}{ \la \xi_1\ra^{2s+2-12\epsilon} } d\xi_1
\lesssim  1,
\end{split}
\end{equation*}
where in the last inequality we used that $2s+2-12 \epsilon >1$ which  holds for $0<\epsilon <\frac{2s+1}{12}$. This completes the proof of the {\bf  Case c)}.
\\

\noindent
{\bf Case d)  $\boxed{c_1|\xi_1| <|\xi| < \dfrac{1}{c_2} |\xi_1|}$:} 
In view of {\bf  Case a)} we can suppose   $|\xi| >1 $, and consequently   $|\xi_1| \sim |\xi|>1$. Let $\mathcal{R}= \{\xi_1  ;   \,\, c_2|\xi| <|\xi_1| < \dfrac{1}{c_1} |\xi| \}$. We will consider two different cases:  $|H(\xi_1)| \gtrsim |\xi|^3$ and  $|H(\xi_1)| \lesssim |\xi|^3$.\\

\noindent
{\bf Subcase d1)    $\boxed{|H(\xi_1)| \gtrsim |\xi|^3}$.} Since $|\xi_1| \sim |\xi|>1$ one has $\la \xi_2 \ra \lesssim \la\xi_1\ra$ and  therefore
\begin{equation*}
\begin{split}
L_1 \lesssim& \int_{\mathcal{R}} \dfrac{\la \xi_2 \ra  }{\langle \xi_1 \rangle^{2s}\la  \xi \ra^{3-12\epsilon}  } d\xi_1
\\
\lesssim & \int_{\R} \dfrac{1}{\langle \xi_1 \rangle^{2s+2-12\epsilon} } d\xi_1 
\lesssim  1,
\end{split}
\end{equation*}
where the last inequality holds for $0<\epsilon <\frac{2s+1}{12}$. \\

\noindent
{\bf Subcase d2)  $\boxed{|H(\xi_1)| \lesssim |\xi|^3}$.}  In this case, we get 
\begin{equation}\label{La.1}
\begin{split}
L_1 =& \int_{\mathcal{R}} \dfrac{\la \xi_2 \ra \la \xi_1 \ra  \la H (\xi_1)\ra^{5\epsilon}}{\langle \xi_1 \rangle^{2s+1}\la H (\xi_1)\ra^{1+\epsilon}  } d\xi_1
\\
\lesssim & \int_{\mathcal{R}} \dfrac{\la \xi \ra^2 \la \xi\ra^{15\epsilon}}{\langle \xi \rangle^{2s+1}\la H (\xi_1)\ra^{1+\epsilon}  } d\xi_1 
\\
\lesssim & \dfrac{\la \xi \ra^2}{\la \xi\ra^{2s+1-15\epsilon} }\int_{\mathcal{R}} \dfrac{ 1}{ \la H (\xi_1)\ra^{1+\epsilon}  } d\xi_1.
\end{split}
\end{equation}

Note that
\begin{equation}\label{xH-1}
H(\xi_1)=\tau- \xi_2^3-\alpha \xi_1^3= \tau -\xi^3+ (1-\alpha) \xi_1^3-3\xi \xi_1^2 +3 \xi^2 \xi_1.
\end{equation}

Making a change of variables $\xi_1= \xi x$, we obtain  that
\begin{equation}\label{xH-2}
\begin{split}
\mathcal{X}:= \int_{\mathcal{{R}}} \dfrac{ 1}{ \la H (\xi_1)\ra^{1+\epsilon}  } d\xi_1=&\int_{\mathcal{\tilde{R}}} \dfrac{ |\xi|}{ \la \tau -\xi^3+ \xi^3 [(1-\alpha) x^3-3 x^2 +3 x ]\ra^{1+\epsilon}  } dx\\
=& \int_{\mathcal{\tilde{R}}} \dfrac{ |\xi|}{ \la \xi^3 \kappa^{-1}[c_0 +\kappa^3+ (x-\kappa)^3-3 x\kappa^2 \alpha ]\ra^{1+\epsilon}  } dx,
\end{split}
\end{equation}
 where $c_0=\dfrac{\tau -\xi^3}{\xi^3}\kappa$ and $\kappa= \dfrac{1}{1-\alpha}$, $\mathcal{\tilde{R}}=\{x  ;   \,\, c_2<|x| < \dfrac{1}{c_1}  \}$. 

Again, we make a change of variables $x=\kappa(\eta+1)$,  next the change of variables $\eta =\omega+ \dfrac{\alpha}{\omega}$ and finally the change of variables $\omega^3= t$, to arrive at
\begin{equation*}
\begin{split}
\mathcal{X} \sim   |\xi|\int_{\mathcal{A}_{\alpha}} \dfrac{ |t^{2/3} -\alpha|}{ |t|^{1/3-\epsilon} ( |t|+ |\xi|^3 |t^2+\kappa_1 t+ \alpha^3 |)^{1+\epsilon}  } dt,
\end{split}
\end{equation*}
where $\kappa_1=c_0(1-\alpha)^3+1-3\alpha$  and $$\mathcal{A}_{\alpha}=\left\{t : \,\, \left| |t^{1/3}+ \dfrac{\alpha}{t^{1/3}}+1| -\sqrt{2-\alpha} \right|<1\right\}.$$

It is not difficult to see that 
\begin{equation}\label{0xH-4}
0<s_1(\alpha) < |t| < s_2(\alpha).
\end{equation}
 Thus $|t| \sim_{\alpha} 1$, and hence
\begin{equation}\label{xH-3}
\begin{split}
\mathcal{X} \sim   |\xi|\int_{\mathcal{A}_{\alpha}} \dfrac{ |t^{2/3} -\alpha|}{  ( 1+ |\xi|^3 |t^2+\kappa_1 t+ \alpha^3 |)^{1+\epsilon}  } dt.
\end{split}
\end{equation}
Let the discriminant be $\Delta=\kappa_1^2-4\alpha^3$. We consider three cases:  $\Delta=0$,  $\Delta>0$ and  $\Delta<0$.
\\

\noindent
{\bf Subcase d2.1)  $\boxed{\Delta=0}$:}
In this case $\kappa_1=\pm2 \alpha^{3/2}$ and 
\begin{equation}\label{0xH-5}
t^2+\kappa_1 t+ \alpha^3=(t+\dfrac{\kappa_1}2)^2=(t\pm \alpha^{3/2})^2.
\end{equation}

We will consider the positive sign in \eqref{0xH-5}, the estimate with negative sign is similar. From \eqref{xH-3}, \eqref{0xH-4}  and \eqref{0xH-5} we obtain
\begin{equation}\label{xH-6}
\begin{split}
\mathcal{X} \lesssim&   |\xi|\int_{\mathcal{A}_{\alpha}} \dfrac{ |t^{2/3} -\alpha|}{  ( 1+ |\xi|^3 |t^2+\kappa_1 t+ \alpha^3|)^{1+\epsilon}  } dt\\
\lesssim &   |\xi|\int_{\mathcal{A}_{\alpha}} \dfrac{|(t^{1/3} -\sqrt{\alpha})(t^{1/3} +\sqrt{\alpha})|}{  ( 1+ |\xi|^3 |t+ (\sqrt{\alpha})^3|^2 )^{1+\epsilon}  } dt\\
\lesssim &   |\xi|\int_{\mathcal{A}_{\alpha}} \dfrac{|(t^{1/3} +\sqrt{\alpha})|}{  ( 1+ |\xi|^3 |t^{1/3}+ \sqrt{\alpha}|^2|t^{2/3}-t^{1/3}\sqrt{\alpha} +\alpha|^2 )^{1+\epsilon}  } dt\\
\lesssim &   |\xi|\int_{\mathcal{A}_{\alpha}} \dfrac{|(t^{1/3} +\sqrt{\alpha})|}{  ( 1+ |\xi|^3 |t^{1/3}+ \sqrt{\alpha}|^2 )^{1+\epsilon}  } dt,
\end{split}
\end{equation}
where we used that 
\begin{equation}\label{cxH-6}
x^2\pm xc+c^2 \geq \dfrac{3c^2}{4}, \quad \forall x, c \in \R.
\end{equation}
 Finally we make a change of variable $u=|\xi|^{3/2} (t^{1/3} +\sqrt{\alpha})$ in the last integral. Considering $t\in \mathcal{A}_{\alpha}$, we have $\mathcal{X} \lesssim \dfrac1{|\xi|^2}$.
Therefore in this case, from \eqref{La.1}, we have
\begin{equation}\label{xLa.1}
\begin{split}
L_1 
\lesssim  \dfrac{\la \xi \ra^2}{\la \xi\ra^{2s+1-15\epsilon} }\mathcal{X}
\lesssim  \dfrac{\la \xi \ra^2}{\la \xi\ra^{2s+1-15\epsilon} } \dfrac1{|\xi|^2}
\lesssim  1,
\end{split}
\end{equation}
in the last inequality   $0<\epsilon <\frac{2s+1}{15}$ was used.

For the sake of simplicity, without loss of generality, from here onwards we consider $\alpha=\frac12$ (the general case $\alpha \in (0,1)$ follows analogously). Observe that 
\begin{equation}\label{xH-4}
\begin{split}
\mathcal{A}_{1/2}=&\left\{t : \dfrac{1}{4}(-4-\sqrt{6}) -\dfrac{1}{4}\sqrt{14+8\sqrt{6}}< t^{1/3}< \dfrac{1}{4}(-4-\sqrt{6}) +\dfrac{1}{4}\sqrt{14+8\sqrt{6}}\right\}\\
=&(l_1, \, l_2),
\end{split}
\end{equation}
where $l_1=-28.6526\dots < l_2=-0.004356\dots <0$.
\\

\noindent
{\bf Subcase d2.2)   $\boxed{\Delta>0}$:}
In this case $|\kappa_1|>2 \alpha^{3/2}$ and 
\begin{equation}\label{00xH-5}
\begin{split}
t^2+\kappa_1 t+ \alpha^3=(t+\dfrac{\kappa_1}2)^2+\alpha^3 -\dfrac{\kappa_1^2}{4}
= (t+\beta)^2-\gamma^2,
\end{split}
\end{equation}
where $\beta= \dfrac{\kappa_1}2$ and $\gamma^2=\beta^2-\alpha^3$ consequently $\beta=\pm(\gamma^2+\alpha^3)^{1/2}$. We consider  $\beta=(\gamma^2+\alpha^3)^{1/2}$ similar argument works if $\beta=-(\gamma^2+\alpha^3)^{1/2}$. Considering a change of variable $\tau=t+\beta$ in \eqref{xH-3}, we get
\begin{equation}\label{xH-9}
\begin{split}
\mathcal{X} \lesssim &   |\xi|\int_{l_1+\beta}^{l_2+\beta} \dfrac{ |(\tau -\beta)^{2/3} -\alpha|}{ ( 1+ |\xi|^3 |\tau^2-\gamma^2|)^{1+\epsilon}  } d\tau\\
\lesssim &   |\xi|\int_{l_1+\beta}^{l_2+\beta}\dfrac{ |(\tau -(\gamma^2+\alpha^3)^{1/2})^{2/3} -\alpha|}{ ( 1+ |\xi|^3 |\tau^2-\gamma^2|)^{1+\epsilon}  } d\tau.
\end{split}
\end{equation}
As $l_1 \leq \tau - \beta =\tau -(\gamma^2+\alpha^3)^{1/2} \leq  l_2$, it is not difficult to see
\begin{equation}\label{xH-10}
\begin{split}
\dfrac{ |(\tau -(\gamma^2+\alpha^3)^{1/2})^{2/3} -\!\alpha|}{ ( 1+ |\xi|^3 |\tau^2-\gamma^2|)^{1+\epsilon}  }=& \dfrac{ |(\tau -(\gamma^2+\!\alpha^3)^{1/2})^{1/3} -\!\sqrt{\alpha}|\,|(\tau -(\gamma^2+\alpha^3)^{1/2})^{1/3} +\!\sqrt{\alpha}|}{ ( 1+ |\xi|^3 |\tau^2-\gamma^2|)^{1+\epsilon}  }\\
\lesssim & \dfrac{ \,|(\tau -(\gamma^2+\alpha^3)^{1/2})^{1/3} +\sqrt{\alpha}|}{ ( 1+ |\xi|^3 |\tau^2-\gamma^2|)^{1+\epsilon}  }\\
= &\dfrac{ \,|\tau -(\gamma^2+\alpha^3)^{1/2} +\alpha^{3/2}|}{ ( 1+ |\xi|^3 |\tau^2-\gamma^2|)^{1+\epsilon}  }\dfrac{1}{|M^2-MN+N^2|}\\
\lesssim &\dfrac{ \,|\tau -(\gamma^2+\alpha^3)^{1/2} +\alpha^{3/2}|}{ ( 1+ |\xi|^3 |\tau^2-\gamma^2|)^{1+\epsilon}  }
\\
\lesssim &\dfrac{ \,|\tau - \gamma| +| \gamma- (\gamma^2+\alpha^3)^{1/2} +\alpha^{3/2}|}{ ( 1+ |\xi|^3 |\tau^2-\gamma^2|)^{1+\epsilon}  },
\end{split}
\end{equation}
where we used the identity $M^3+N^3=(M+N)(M^2-MN+N^2)$, and the inequality \eqref{cxH-6} with $M=(\tau -(\gamma^2+\alpha^3)^{1/2})^{1/3}$ and $N=\sqrt{\alpha}$.

Thus using \eqref{x1lfc1-2}, \eqref{xH-9} and \eqref{xH-10}, we obtain
\begin{equation}\label{xH-11}
\begin{split}
\mathcal{X} \lesssim &   |\xi|\int_{l_1+\beta}^{l_2+\beta} \dfrac{ \,|\tau - \gamma|}{ ( 1+ |\xi|^3 |\tau^2-\gamma^2|)^{1+\epsilon} }d\tau+|\xi|\int_{l_1+\beta}^{l_2+\beta}\dfrac{| \gamma- (\gamma^2+\alpha^3)^{1/2} +\alpha^{3/2}|}{ ( 1+ |\xi|^3 |\tau^2-\gamma^2|)^{1+\epsilon}  } d\tau\\
\lesssim &   \dfrac{1}{|\xi|^2} +|\xi|\int_{l_1+\beta}^{l_2+\beta}\dfrac{| \gamma- (\gamma^2+\alpha^3)^{1/2} +\alpha^{3/2}|}{ ( 1+ |\xi|^3 |\tau^2-\gamma^2|)^{1+\epsilon}  } d\tau.
\end{split}
\end{equation}

In order to estimate the last integral, note that
\begin{equation}\label{xH-12}
\begin{split}
\dfrac{| \gamma- (\gamma^2+\alpha^3)^{1/2} +\alpha^{3/2}|}{ ( 1+ |\xi|^3 |\tau^2-\gamma^2|)^{1+\epsilon}  }=&\dfrac{| (\gamma+\alpha^{3/2})^2- (\gamma^2+\alpha^3) |}{ ( 1+ |\xi|^3 |\tau^2-\gamma^2|)^{1+\epsilon}  }\dfrac{1}{|\gamma+\alpha^{3/2}+(\gamma^2+\alpha^3)^{1/2}|}\\
=&\dfrac{2|\gamma|\alpha^{3/2}}{ ( 1+ |\xi|^3 |\tau^2-\gamma^2|)^{1+\epsilon}  }\dfrac{1}{|\gamma+\alpha^{3/2}+(\gamma^2+\alpha^3)^{1/2}|}.
\end{split}
\end{equation}

For $\alpha \in (0,1)$ one has $(\gamma^2+\alpha^3)^{1/2} \geq |\gamma|$.  It follows that 
\begin{equation}\label{xH-13}
\begin{split}
\gamma+\alpha^{3/2}+(\gamma^2+\alpha^3)^{1/2} \geq \gamma+\alpha^{3/2}+|\gamma| \geq \alpha^{3/2}.
\end{split}
\end{equation}
Finally using \eqref{xlfc1-2}, \eqref{xH-11}, \eqref{xH-12} and \eqref{xH-13} we deduce that
\begin{equation}\label{xH-14}
\begin{split}
\mathcal{X} 
\lesssim &   \dfrac{1}{|\xi|^2} +|\xi|\int_{l_1+\beta}^{l_2+\beta}\dfrac{| \gamma|}{ ( 1+ |\xi|^3 |\tau^2-\gamma^2|)^{1+\epsilon}  } d\tau
\lesssim   \dfrac{1}{|\xi|^2}.
\end{split}
\end{equation}
 Analogously as in the above case, we obtain $
L_1 
\lesssim  1,
$
provided $0<\epsilon <\frac{2s+1}{15}$.
\\

\noindent
{\bf Subcase d2.3) $\boxed{\Delta<0}$:}
This case is similar with the case $\Delta>0$, for the sake of completeness we make all details here. 
In this case $|\kappa_1|<2 \alpha^{3/2}$ and 
\begin{equation}\label{p00xH-5}
\begin{split}
t^2+\kappa_1 t+ \alpha^3=&(t+\dfrac{\kappa_1}2)^2+\alpha^3 -\dfrac{\kappa_1^2}{4}\\
= &(t+\beta)^2+\gamma^2,
\end{split}
\end{equation}
where $\beta= \dfrac{\kappa_1}2$ and $\gamma^2=\alpha^3-\beta^2$ consequently $\beta=\pm(\alpha^3-\gamma^2)^{1/2}$. We consider  $\beta=(\alpha^3-\gamma^2)^{1/2}$, similar argument works if $\beta=-(\alpha^3-\gamma^2)^{1/2}$. Considering a change of variable $\tau=t+\beta$ in \eqref{xH-3}, we get
\begin{equation}\label{pxH-9}
\begin{split}
\mathcal{X} \lesssim &   |\xi|\int_{l_1+\beta}^{l_2+\beta} \dfrac{ |(\tau -\beta)^{2/3} -\alpha|}{ ( 1+ |\xi|^3 |\tau^2+\gamma^2|)^{1+\epsilon}  } dt\\
\lesssim &   |\xi|\int_{l_1+\beta}^{l_2+\beta}\dfrac{ |(\tau -(\alpha^3-\gamma^2)^{1/2})^{2/3} -\alpha|}{ ( 1+ |\xi|^3 |\tau^2+\gamma^2|)^{1+\epsilon}  } dt.
\end{split}
\end{equation}

As $l_1 \leq \tau - \beta =\tau -(\alpha^3-\gamma^2)^{1/2} \leq  l_2$, analogously as above it is not difficult to see
\begin{equation}\label{pxH-10}
\begin{split}
\dfrac{ |(\tau -(\alpha^3-\gamma^2)^{1/2})^{2/3} -\alpha|}{ ( 1+ |\xi|^3 |\tau^2+\gamma^2|)^{1+\epsilon}  }=& \dfrac{ |(\tau -(\alpha^3-\gamma^2)^{1/2})^{1/3} -\sqrt{\alpha}|\,|(\tau -(\alpha^3-\gamma^2)^{1/2})^{1/3} +\sqrt{\alpha}|}{ ( 1+ |\xi|^3 |\tau^2+\gamma^2|)^{1+\epsilon}  }\\
\lesssim & \dfrac{ \,|(\tau -(\alpha^3-\gamma^2)^{1/2})^{1/3} +\sqrt{\alpha}|}{ ( 1+ |\xi|^3 |\tau^2+\gamma^2|)^{1+\epsilon}  }\\
= &\dfrac{ \,|\tau -(\alpha^3-\gamma^2)^{1/2} +\alpha^{3/2}|}{ ( 1+ |\xi|^3 |\tau^2-\gamma^2|)^{1+\epsilon}  }\dfrac{1}{|M^2-MN+N^2|}\\
\lesssim &\dfrac{ \,|\tau -(\alpha^3-\gamma^2)^{1/2} +\alpha^{3/2}|}{ ( 1+ |\xi|^3 |\tau^2+\gamma^2|)^{1+\epsilon}  }
\\
\lesssim &\dfrac{ \,|\tau + \gamma| +| -\gamma- (\alpha^3-\gamma^2)^{1/2} +\alpha^{3/2}|}{ ( 1+ |\xi|^3 |\tau^2+\gamma^2|)^{1+\epsilon}  },
\end{split}
\end{equation}
where we used the identity $M^3+N^3=(M+N)(M^2-MN+N^2)$ and the inequality \eqref{cxH-6} with $M=(\tau -(\alpha^3-\gamma^2)^{1/2})^{1/3}$ and $N=\sqrt{\alpha}$.

Thus combining \eqref{x1lfc1-2}, \eqref{pxH-9} and \eqref{pxH-10}, we obtain
\begin{equation}\label{pxH-11}
\begin{split}
\mathcal{X} \lesssim &   |\xi|\int_{l_1+\beta}^{l_2+\beta} \dfrac{ \,|\tau + \gamma|}{ ( 1+ |\xi|^3 |\tau^2+\gamma^2|)^{1+\epsilon} }dt +|\xi|\int_{l_1+\beta}^{l_2+\beta}\dfrac{| \gamma|}{ ( 1+ |\xi|^3 |\tau^2+\gamma^2|)^{1+\epsilon}  } dt\\
&+|\xi|\int_{l_1+\beta}^{l_2+\beta}\dfrac{| (\alpha^3-\gamma^2)^{1/2} -\alpha^{3/2}|}{ ( 1+ |\xi|^3 |\tau^2+\gamma^2|)^{1+\epsilon}  } dt\\
\lesssim &   \dfrac{1}{|\xi|^2} +|\xi|\int_{l_1+\beta}^{l_2+\beta}\dfrac{| (\alpha^3-\gamma^2)^{1/2} -\alpha^{3/2}|}{ ( 1+ |\xi|^3 |\tau^2+\gamma^2|)^{1+\epsilon}  } dt.
\end{split}
\end{equation}

In order to estimate the last integral, note that
\begin{equation}\label{pxH-12}
\begin{split}
\dfrac{|(\alpha^3-\gamma^2)^{1/2} -\alpha^{3/2}|}{ ( 1+ |\xi|^3 |\tau^2-\gamma^2|)^{1+\epsilon}  }=&\dfrac{| (\alpha^3-\gamma^2) -\alpha^3|}{ ( 1+ |\xi|^3 |\tau^2-\gamma^2|)^{1+\epsilon}  }\dfrac{1}{|(\alpha^3-\gamma^2)^{1/2} +\alpha^{3/2}|}\\
\leq &\dfrac{\gamma^2}{ ( 1+ |\xi|^3 |\tau^2-\gamma^2|)^{1+\epsilon}  }\dfrac{1}{\alpha^{3/2}}.
\end{split}
\end{equation}

For $\alpha \in (0,1)$, from definition of $\gamma$ one has $|\gamma| \leq \alpha^{3/2}\leq 1$. Using \eqref{xlfc1-2}, \eqref{pxH-11} and \eqref{pxH-12}, it follows that 
\begin{equation}\label{pxH-14}
\begin{split}
\mathcal{X} 
\lesssim &   \dfrac{1}{|\xi|^2} +|\xi|\int_{l_1+\beta}^{l_2+\beta}\dfrac{| \gamma|}{ ( 1+ |\xi|^3 |\tau^2-\gamma^2|)^{1+\epsilon}  } dt\\
\lesssim &   \dfrac{1}{|\xi|^2}.
\end{split}
\end{equation}
The rest follows as in {\bf Case d2.2)}.
\end{proof}

The following result shows that the bilinear estimate proved in Lemma \ref{lema2} is sharp.
\begin{proposition}\label{prop2ill}
The bilinear estimates \eqref{bil-1.1} and \eqref{bil-2.1} fail to hold for any $b \in \R$ whenever $s<-1/2$.
\end{proposition}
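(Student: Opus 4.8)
The plan is to exhibit, for each $s<-1/2$, a one-parameter family of functions that saturates one side of the inequality while forcing the other side to grow, so that no constant $C$ (and no choice of $b$) can make \eqref{bil-1.1} hold; by the symmetry between the two cubics $\tau=\xi^3$ and $\tau=\alpha\xi^3$ the same construction handles \eqref{bil-2.1}. Concretely, I would work on the Fourier side: since \eqref{bil-1.1} is equivalent to \eqref{bil-3xc} via Plancherel, it suffices to produce $f,g\in L^2(\R^2)$ (or rather $\widetilde f,\widetilde g$ supported on thin sets) for which the bilinear form $\mathcal{B}_s(f,g)$ has $L^2_{\xi,\tau}$-norm much larger than $\|f\|_{L^2}\|g\|_{L^2}$. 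The natural choice is to take $\widetilde f$ and $\widetilde g$ to be (normalized) characteristic functions of small boxes in frequency--modulation space centered on the resonance sets, i.e.\ $\widetilde f$ concentrated near $\{\tau_2=\xi_2^3\}$ and $\widetilde g$ near $\{\tau_1=\alpha\xi_1^3\}$, so that both modulation weights $\la\tau_2-\xi_2^3\ra$ and $\la\tau_1-\alpha\xi_1^3\ra$ are $O(1)$ on the supports. Then the left-hand side of \eqref{bil-3xc} essentially measures the $L^2$ size of a convolution of two such box-functions, weighted by $\la\xi_2\ra^{1/2}\la\xi_1\ra^{-s}$.

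The key computational step is the resonance/dispersion count: with $\xi_1,\xi_2$ of size $\sim N$ (a large parameter) and the output frequency $\xi=\xi_1+\xi_2$ I must understand the set of $(\xi_1,\tau_1)$ for which, for a \emph{fixed} output $(\xi,\tau)$, both $\tau_1-\alpha\xi_1^3$ and $(\tau-\tau_1)-(\xi-\xi_1)^3$ are $O(1)$. This forces $\tau-\alpha\xi_1^3-(\xi-\xi_1)^3=O(1)$, i.e.\ confines $\xi_1$ to a neighborhood of the solution set of a cubic in $\xi_1$. The crucial point — and the place where $s=-1/2$ enters — is the size of the derivative $H'(\xi_1)=3(1-\alpha)\xi_1^2+3\xi^2-6\xi\xi_1$ governing this confinement (the same $H$ as in Lemma \ref{lema2}). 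Choosing the configuration so that $H'(\xi_1)$ is \emph{as large as possible}, namely $\sim N^2$ (e.g.\ $\xi_1\sim N$, $\xi\sim N$ with $\xi$ and $\xi_1$ separated so no cancellation occurs), the $\xi_1$-interval has length $\sim N^{-2}$, and one computes that $\|\mathcal B_s(f,g)\|_{L^2_{\xi,\tau}}\sim N^{1/2-s}\cdot N^{-1}\cdot(\text{normalization})$ while $\|f\|_{L^2}\|g\|_{L^2}\sim 1$; tracking the powers of $N$ carefully, the ratio behaves like $N^{-(2s+1)/2}$ up to lower order, which blows up precisely when $2s+1<0$, i.e.\ $s<-1/2$. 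Since the modulation variables never contribute more than $O(1)$ on the chosen supports, the weight $\la\tau-\xi^3\ra^b$ (or any $b$) is harmless, giving the claimed independence of $b$.

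The main obstacle I anticipate is getting the multiplicities right in the convolution: for a fixed output $(\xi,\tau)$ there can be up to three values of $\xi_1$ (the three branches of monotonicity of $H$ identified in \eqref{intervals}) contributing, and near the roots $r_1,r_2$ of $H'$ the confinement interval is longer than $N^{-2}$, so I must either choose the output frequency window to avoid those degenerate branches or absorb their contribution into a harmless constant. I would handle this by restricting the support of $\widetilde f,\widetilde g$ to the regime $c_1|\xi_1|<|\xi|<c_2^{-1}|\xi_1|$ removed — i.e.\ to one of the "generic" cases (a), (b), or (c) of Lemma \ref{lema2} where $H'(\xi_1)\sim N^2$ cleanly — so that exactly one branch is active and the count is exact. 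A secondary, more routine point is verifying that $\widetilde g$ supported near $\{\tau_1=\alpha\xi_1^3\}$ with $\xi_1\sim N$ has the right $L^2$ mass after the $\la\xi_1\ra^s$ and $\la\tau_1-\alpha\xi_1^3\ra^{1/2+\epsilon}$ weights are stripped off, which is a direct measure computation. Once these bookkeeping issues are settled the power-counting conclusion is immediate.
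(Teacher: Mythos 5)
Your construction is essentially the paper's own counterexample (Proposition \ref{gail-b1}): two boxes of dimensions $N^{-2}\times 1$ placed on the curves $\tau=\xi^3$ and $\tau=\alpha\xi^3$ at frequency $\sim N$, so that all modulation weights are $O(1)$ (hence any $b$ is harmless) and the ratio of the two sides scales like $N^{-(2s+1)/2}$, blowing up for $s<-\frac12$. Your concern about multiple branches of monotonicity of $H$ is moot for a lower bound with nonnegative integrands --- the paper just computes $\|\chi_{R_1}*\chi_{R_2}\|_{L^2}\sim |R_j|^{3/2}$ exactly via Lemma \ref{conv-lema2} --- but your configuration and power count agree with the paper's.
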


Now we move to prove the trilinear estimates.

\begin{proof}[Proof of Proposition \ref{prop1}] We provide detailed proof of the estimate \eqref{tlint}  only, the proof of the estimate \eqref{tlin-2t} follows analogously.

Let $b=\frac12+\epsilon$ and $b'=-\frac12+2\epsilon$. Then using duality, the estimate \eqref{tlint} is equivalent to
\begin{equation}\label{tlin-e1}
\|(uvw)_x\|_{X_{s,-\frac12+2\epsilon}}=\sup_{h\in X_{-s,\frac12-2\epsilon}}\Big|\int_{\R^2}(uvw)_xh\Big| \lesssim  \|u\|_{X_{s,\frac12+\epsilon}}\|v\|_{X^{\alpha}_{s,\frac12+\epsilon}}\|w\|_{X^{\alpha}_{s,\frac12+\epsilon}}\|h\|_{X_{-s,\frac12-2\epsilon}}.
\end{equation}

Using the relation  $\int fg =\int f\,\overline{\overline{g}} = \int \widehat{f}\;\overline{\widehat{\overline{g}}} =\int\widehat{f}\,\widecheck{g}=\int\widehat{f}(\xi)\widehat{g}(-\xi)$, one has
\begin{equation}\label{tln-e2}
\begin{split}
\Big|\int_{\R^2}(uvw)_xh\Big|&=\Big|\int_{\R^2}i\xi\int_{\xi_1+\xi_2+\xi_3=\xi\atop \tau_1+\tau_2+\tau_3=\tau} \widetilde{u}(\xi_1,\tau_1)\widetilde{v}(\xi_2,\tau_2)\widetilde{w}(\xi_3,\tau_3)\widetilde{h}(-\xi, -\tau)\Big|\\
&=\Big|\int_{\xi_1+\cdots+\xi_4=0\atop \tau_1+\cdots+\tau_4=0}\xi_4\,
\widetilde{u}(\xi_1,\tau_1)\widetilde{v}(\xi_2,\tau_2)\widetilde{w}(\xi_3,\tau_3)\widetilde{h}(\xi_4, \tau_4)\Big|.
\end{split}
\end{equation}

Using \eqref{tln-e2} in \eqref{tlin-e1}, we need to prove
\begin{equation}\label{tln-e3}
\Big|\int_{\xi_1+\cdots+\xi_4=0\atop \tau_1+\cdots+\tau_4=0}\xi_4\,
 \widetilde{u}(\xi_1,\tau_1)\widetilde{v}(\xi_2,\tau_2)\widetilde{w}(\xi_3,\tau_3)\widetilde{h}(\xi_4, \tau_4)\Big| \lesssim  \|u\|_{X_{s,\frac12+\epsilon}}\|v\|_{X^{\alpha}_{s,\frac12+\epsilon}}\|w\|_{X^{\alpha}_{s,\frac12+\epsilon}}\|h\|_{X_{-s,\frac12-2\epsilon}}.
\end{equation}

Let
\begin{equation}\label{tln-e4}
\begin{split}
&\|u\|_{X_{s,\frac12+\epsilon}}=\|\la\xi\ra^s\la\tau-\xi^3\ra^{\frac12+\epsilon}\widetilde{u}(\xi,\tau)\|_{L^2_{\xi\tau}}=:\|f_1\|_{L^2_{\xi\tau}}\\
&\|v\|_{X^{\alpha}_{s,\frac12+\epsilon}}=\|\la\xi\ra^s\la\tau-\alpha\xi^3\ra^{\frac12+\epsilon}\widetilde{v}(\xi,\tau)\|_{L^2_{\xi\tau}}=:\|f_2\|_{L^2_{\xi\tau}}\\
&\|w\|_{X^{\alpha}_{s,\frac12+\epsilon}}=\|\la\xi\ra^s\la\tau-\alpha\xi^3\ra^{\frac12+\epsilon}\widetilde{w}(\xi,\tau)\|_{L^2_{\xi\tau}}=:\|f_3\|_{L^2_{\xi\tau}}\\
&\|h\|_{X_{-s,\frac12-2\epsilon}}=\|\la\xi\ra^{-s}\la\tau-\xi^3\ra^{\frac12-2\epsilon}\widetilde{h}(\xi,\tau)\|_{L^2_{\xi\tau}}=:\|f_4\|_{L^2_{\xi\tau}}.
\end{split}
\end{equation}

From \eqref{tln-e4} and \eqref{tln-e3}, the matter reduces to proving
\begin{equation}\label{tln-e5}
\Big|\int_{\xi_1+\cdots+\xi_4=0\atop \tau_1+\cdots+\tau_4=0}
 \frac{\xi_4\,\widetilde{f_1}(\xi_1,\tau_1)\widetilde{f_2}(\xi_2,\tau_2)\widetilde{f_3}(\xi_3,\tau_3)\widetilde{f_4}(\xi_4, \tau_4)}{\la\xi_1\ra^s\la\tau_1-\xi_1^3\ra^{\frac12+\epsilon}\la\xi_2\ra^s\la\tau_2-\alpha\xi_2^3\ra^{\frac12+\epsilon}\la\xi_3\ra^s\la\tau_3-\alpha\xi_3^3\ra^{\frac12+\epsilon}\la\xi_4\ra^{-s}\la\tau_4-\xi_4^3\ra^{\frac12-2\epsilon}}\Big| \lesssim \Pi_{j=1}^4\|f_j\|_{L^2_{\xi\tau}}.
\end{equation}

So, we need to prove
\begin{equation}\label{tln-e6}
\Big|\int_{\xi_1+\cdots+\xi_4=0\atop \tau_1+\cdots+\tau_4=0}
 m(\xi_1,\tau_1,\cdots,\xi_4,\tau_4)\Pi_{j=1}^4\widetilde{f}_j(\xi_j, \tau_j)\Big| \lesssim \Pi_{j=1}^4\|f_j\|_{L^2_{\xi\tau}},
\end{equation}
where
\begin{equation}\label{tln-e7}
 m(\xi_1,\tau_1,\cdots,\xi_4,\tau_4):=\frac{\xi_4\,\la\xi_4\ra^{s}}{\la\xi_1\ra^s\la\xi_2\ra^s\la\xi_3\ra^s\la\tau_1-\xi_1^3\ra^{\frac12+\epsilon}\la\tau_2-\alpha\xi_2^3\ra^{\frac12+\epsilon}\la\tau_3-\alpha\xi_3^3\ra^{\frac12+\epsilon}\la\tau_4-\xi_4^3\ra^{\frac12-2\epsilon}}.
\end{equation}

In this way, recalling the definition of the norm $\|m\|_{[4;\R^2]}$ of the multiplier $m$,  the whole matter reduces to showing that
\begin{equation}\label{tln-e8}
\|m\|_{[4;\R^2]}\lesssim 1.
\end{equation}

Note that $-\xi_4 = \xi_1+\xi_2+\xi_3\implies \la\xi_4\ra\leq\la\xi_1\ra+\la\xi_2\ra+\la\xi_3\ra$. Therefore, considering $s+1=s_0+s_1$ with $s_1\geq 0$, one can obtain
\begin{equation}\label{xi-4}
 \xi_4\la\xi_4\ra^s\leq\la\xi_4\ra^{s+1}\lesssim\la\xi_4\ra^{s_0}\big(\la\xi_1\ra^{s_1}+\la\xi_2\ra^{s_1}+\la\xi_3\ra^{s_1}\big).
\end{equation}

From \eqref{xi-4} and \eqref{tln-e7}, we get
\begin{equation}\label{mult-1}
\begin{split}
 m\leq &\frac{\la\xi_4\ra^{s_0}\la\xi_1\ra^{s_1}}{\la\xi_1\ra^s\la\xi_2\ra^s\la\xi_3\ra^s\la\tau_1-\xi_1^3\ra^{\frac12+\epsilon}\la\tau_2-\alpha\xi_2^3\ra^{\frac12+\epsilon}\la\tau_3-\alpha\xi_3^3\ra^{\frac12+\epsilon}\la\tau_4-\xi_4^3\ra^{\frac12-2\epsilon}}\\
 &+ \frac{\la\xi_4\ra^{s_0}\la\xi_2\ra^{s_1}}{\la\xi_1\ra^s\la\xi_2\ra^s\la\xi_3\ra^s\la\tau_1-\xi_1^3\ra^{\frac12+\epsilon}\la\tau_2-\alpha\xi_2^3\ra^{\frac12+\epsilon}\la\tau_3-\alpha\xi_3^3\ra^{\frac12+\epsilon}\la\tau_4-\xi_4^3\ra^{\frac12-2\epsilon}}\\
 &+ \frac{\la\xi_4\ra^{s_0}\la\xi_3\ra^{s_1}}{\la\xi_1\ra^s\la\xi_2\ra^s\la\xi_3\ra^s\la\tau_1-\xi_1^3\ra^{\frac12+\epsilon}\la\tau_2-\alpha\xi_2^3\ra^{\frac12+\epsilon}\la\tau_3-\alpha\xi_3^3\ra^{\frac12+\epsilon}\la\tau_4-\xi_4^3\ra^{\frac12-2\epsilon}}\\
 =:&J_1+J_2+J_3.
 \end{split}
\end{equation}

If we take $s_0=\frac12$, then $s_1=s+\frac12$, so $J_i$, $i=1,2,3$ can be written as
\begin{equation}\label{J1}
\begin{split}
J_1&=\frac{\la\xi_4\ra^{\frac12}\la\xi_1\ra^{\frac12}}{\la\xi_2\ra^s\la\xi_3\ra^s\la\tau_1-\xi_1^3\ra^{\frac12+\epsilon}\la\tau_2-\alpha\xi_2^3\ra^{\frac12+\epsilon}\la\tau_3-\alpha\xi_3^3\ra^{\frac12+\epsilon}\la\tau_4-\xi_4^3\ra^{\frac12-2\epsilon}}\\
& =  \frac{\la\xi_1\ra^{\frac12}}{\la\tau_1-\xi_1^3 \ra^{\frac12+\epsilon} \la\xi_2\ra^s \la\tau_2-\alpha\xi_2^3\ra^{\frac12+\epsilon}} \frac{\la\xi_4\ra^{\frac12}}{\la\tau_4-\xi_4^3\ra^{\frac12-2\epsilon}\la\xi_3\ra^s\la\tau_3-\alpha\xi_3^3\ra^{\frac12+\epsilon}}\\
&\leq m_1(\xi_1, \tau_1, \xi_2,\tau_2)\,m_1(\xi_4,\tau_4,\xi_3,\tau_3),
\end{split}
\end{equation}
\begin{equation}\label{J2}
\begin{split}
J_2&=\frac{\la\xi_4\ra^{\frac12}\la\xi_2\ra^{\frac12}}{\la\xi_1\ra^s\la\xi_3\ra^s\la\tau_1-\xi_1^3\ra^{\frac12+\epsilon}\la\tau_2-\alpha\xi_2^3\ra^{\frac12+\epsilon}\la\tau_3-\alpha\xi_3^3\ra^{\frac12+\epsilon}\la\tau_4-\xi_4^3\ra^{\frac12-2\epsilon}}\\
&=\frac{\la\xi_4\ra^{\frac12}}{\la\tau_4-\xi_4^3\ra^{\frac12-2\epsilon}\la\xi_3\ra^s\la\tau_3-\alpha\xi_3^3\ra^{\frac12+\epsilon}} \frac{\la\xi_2\ra^{\frac12}}{\la\tau_2-\alpha\xi_2^3\ra^{\frac12+\epsilon}\la\xi_1\ra^s\la\tau_1-\xi_1^3\ra^{\frac12+\epsilon}
}\\
&\leq m_1(\xi_4,\tau_4,\xi_3,\tau_3)\,m_2(\xi_2,\tau_2,\xi_1,\tau_1),
\end{split}
\end{equation}
\begin{equation}\label{J3}
\begin{split}
J_3&=\frac{\la\xi_4\ra^{\frac12}\la\xi_3\ra^{\frac12}}{\la\xi_1\ra^s\la\xi_2\ra^s\la\tau_1-\xi_1^3\ra^{\frac12+\epsilon}\la\tau_2-\alpha\xi_2^3\ra^{\frac12+\epsilon}\la\tau_3-\alpha\xi_3^3\ra^{\frac12+\epsilon}\la\tau_4-\xi_4^3\ra^{\frac12-2\epsilon}}\\
&=\frac{\la\xi_4\ra^{\frac12}}{\la\tau_4-\xi_4^3\ra^{\frac12-2\epsilon}\la\xi_2\ra^s\la\tau_2-\alpha\xi_2^3\ra^{\frac12+\epsilon}}\frac{\la\xi_3\ra^{\frac12}}{\la\tau_3-\alpha\xi_3^3\ra^{\frac12+\epsilon}\la\xi_1\ra^s\la\tau_1-\xi_1^3\ra^{\frac12+\epsilon}}\\
&\leq m_1(\xi_4,\tau_4,\xi_2,\tau_2)\,m_2(\xi_3,\tau_3,\xi_1,\tau_1).
\end{split}
\end{equation}

With all what we did above, using comparison principle, permutation and composition properties (see respectively Lemmas 3.1, 3.3 and 3.7 in \cite{Tao}), it is enough to bound $\|m_j\|_{[3;\R^2]}$, $j=1,2$, where
\begin{equation}\label{m1}
m_1(\xi_1, \tau_1, \xi_2,\tau_2)=\frac{\la\xi_1\ra^{\frac12}}{\la\tau_1-\xi_1^3 \ra^{\frac12-2\epsilon} \la\xi_2\ra^s \la\tau_2-\alpha\xi_2^3\ra^{\frac12+\epsilon}},
\end{equation}
\begin{equation}\label{m12}
m_2(\xi_1, \tau_1, \xi_2,\tau_2)=\frac{\la\xi_1\ra^{\frac12}}{\la\tau_1-\alpha\xi_1^3 \ra^{\frac12-2\epsilon} \la\xi_2\ra^s \la\tau_2-\xi_2^3\ra^{\frac12+\epsilon}}.
\end{equation}

 With the argument presented in \cite{Tao}, proving 
$\|m_j\|_{[3;\R^2]}\lesssim 1$, $j=1,2$ is equivalent respectively to showing the following bilinear estimates
\begin{equation}\label{bil-1}
\|uv\|_{L^2(\R^2)}\lesssim \|u\|_{X_{-\frac12, \frac12-2\epsilon}}\|v\|_{X^{\alpha}_{s, \frac12+\epsilon}},
\end{equation}
and
\begin{equation}\label{bil-2}
\|uv\|_{L^2(\R^2)}\lesssim \|u\|_{X^{\alpha}_{-\frac12, \frac12-2\epsilon}}\|v\|_{X_{s, \frac12+\epsilon}}.
\end{equation}
This equivalence can be proved using duality with similar calculations as the ones used to get \eqref{tln-e6}.

Recall that the estimates \eqref{bil-1} and \eqref{bil-2} for $s>- \frac12$ are already proved in Lemma \ref{lema2}, and this completes the proof of \eqref{tlint}.
\end{proof}

\begin{remark}
For $\alpha=1$, one has  $m_1=m_2$  and in this case the bilinear estimate \eqref{bil-2} and thus  the norm of $m_2$   is proven to be bounded in \cite{Tao} for all $s\geq \frac14$. Our interest here is to consider $0<\alpha<1$.
\end{remark}

\subsection{Proof of the local well-posedness result}


In this subsection  we use the linear estimates recorded in Section \ref{Prel-e} and the trilinear estimates from Proposition \ref{prop1} to  sketch  proof of Theorem \ref{loc-sys}. 

\begin{proof}[Proof of Theorem \ref{loc-sys}]
 We define spaces $Z_{s,b}:=X_{s, b}\times X^{\alpha}_{s, b}$ and $Y^s:= H^{s}\times H^{s}$ with norms  $\|(v, w)\|_{X_{s, b}\times X^{\alpha}_{s, b}} := \max \{\|v\|_{X_{s,b}}, \|w\|_{X^{\alpha}_{s,b}}\}$ and similar for $Y^s$.  Let $a>0$ and consider a ball in $Z_{s, b}$ given by
\begin{equation}\label{sy-ball}
 \mathcal{X}_s^{a} = \bigl\{ (v, w) \in Z_{s, b};\:\|(v, w)\|_{Z_{s, b}} < a\bigl\}.
 \end{equation}

As we are interested in finding local solution to the IVP \eqref{ivp-sy}, we define the following application with the use of cut-off functions
\begin{equation}\label{contrac-s2}
\begin{cases}
\Phi_{\phi}[v, w](t):= \psi_1(t)U(t)\phi - \psi_{\delta}(t)\displaystyle\int_{0}^{t}U(t-t')\p_x(vw^2)(t')\,dt',\\
\Psi_{\psi}[v, w](t):= \psi_1(t)U^{\alpha}(t)\psi - \psi_{\delta}(t)\displaystyle\int_{0}^{t}U^{\alpha}(t-t')\p_x(v^2w)(t')\,dt'.
\end{cases}
\end{equation}

 We will show that, there exist $a>0$ and $\delta >0$ such that the application $\Phi\times \Psi$ maps
 $\mathcal{X}_s^{a}$ into $\mathcal{X}_s^{a}$ and is a contraction.

We start  estimating the first component $\Phi$. Using estimates \eqref{lin.1}, \eqref{nlin.1} from Lemma \ref{lemma1},  we have for $s>- \frac12$ and $\theta := 1-b+b' >0$,
\begin{equation}\label{sy-1}
\|\Phi\|_{X_{s,b}}\leq C\|\phi\|_{H^{s}} +C\delta^{\theta}\|\p_x(vw^2)\|_{X_{s,b'}}.
\end{equation}

Now, using the   trilinear estimates from Proposition \ref{prop1}, one obtains
\begin{equation}\label{til-m22}
\|\p_x(vw^2\|_{X_{s,b'}}\leq C\|v\|_{X_{s,b}}\|w\|_{X_{s,b}^{\alpha}}^2.
\end{equation}

Inserting  \eqref{til-m22} in \eqref{sy-1}, one gets
\begin{equation}\label{phi-11}
\begin{split}
 \|\Phi\|_{X_{s,b}}
 &\leq C\|\phi\|_{H^s} +C\delta^{\theta}\|v\|_{X_{s,b}}\|w\|_{X_{s,b}^{\alpha}}^2\\
& \leq C\|(\phi, \psi)\|_{Y^s} +C\delta^{\theta}\|(v, w)\|_{Z_{s,b}}^3.
\end{split}
\end{equation}

In an analogous manner, it is not difficult to obtain
\begin{equation}\label{sy-2}
\|\Psi\|_{X_{s,b}^{\alpha}}\leq C\|(\phi, \psi)\|_{Y^s} +C\delta^{\theta}\|(v, w)\|_{Z_{s,b}}^3.
\end{equation}

Therefore, from \eqref{phi-11} and \eqref{sy-2}, we obtain
\begin{equation}\label{sy-3}
\|(\Phi, \Psi)\|_{Z_{s,b}}\leq C\|(\phi, \psi)\|_{Y^s} +C\delta^{\theta}\|(v, w)\|_{Z_{s,b}}^3.
\end{equation}

Let us choose $a = 2C\|(\phi, \psi)\|_{Y^s}$,  then from \eqref{sy-3}, we get
\begin{equation}\label{sy-4}
\|(\Phi, \Psi)\|_{Z_{s,b}}\leq \frac a2 + C\delta^{\theta} a^3.
\end{equation}

Now, if we take $\delta>0$ such that $C\delta^{\theta} a^2<\frac12$, then \eqref{sy-4} yields
\begin{equation}\label{sy-5}
\|(\Phi, \Psi)\|_{Z_{s,b}}\leq \frac a2 + \frac a2 = a.
\end{equation}

Therefore, the application $\Phi\times\Psi$ maps $\mathcal{X}_s^{a}$ into $\mathcal{X}_s^{a}$. With the similar technique, one can easily show that $\Phi\times\Psi$ is a contraction. Hence by a standard argument one can prove that the IVP \eqref{ivp-sy} is locally well-posed for initial data $(\phi, \psi)\in Y^s$ for any $s>- \frac12$. Moreover, from \eqref{sy-5} and the choice of $a$, one has
\begin{equation}\label{sy-6}
\|(\Phi, \Psi)\|_{Z_{s,b}}\leq C\|(\phi, \psi)\|_{Y^s}.
\end{equation}
 The rest of the proof follows standard argument, so we omit the details.
 \end{proof}

\section{Failure of bilinear and trilinear estimates, and Ill-posedness}\label{ill-posd-T}
In this section we prove the failure of the  trilinear estimates that are crucial in the argument we used to obtain local well-posedness result using Bourgain's spaces.  Also we will prove the failure of the bilinear estimate without derivative that plays a fundamental role in the proof of the trilinear estimates.  Finally, we prove ill-posedness result by showing that the application data-solution is not $C^3$ at the origin for initial data with Sobolev regularity below $-\frac12$. To obtain these negative results we construct  counter examples exploiting the lack of cancellation in the resonance relation when $0<\alpha<1$.

\subsection{Failure of trilinear estimates}
We start by recording two elementary results about convolution that we use to prove the failure of trilinear estimate.

\begin{lemma}\label{conv-lema1}
Let $R_j:=[a_j, b_j]$, $j=1,\cdots,n$ be intervals of size $|R_j| = b_j-a_j=N$. Then
\begin{equation}\label{conv-e1}
\|\chi_{R_1}\convolution\chi_{R_2}\convolution\cdots\convolution\chi_{R_n}\|_{L^2}\sim N^{n-\frac12}=|R_j|^{n-\frac12}.
\end{equation}
\begin{proof} First note that
\begin{equation}
\widehat{\chi_{R_j}}(\xi) \sim \int_{a_j}^{b_j}e^{-i\xi x}dx = \frac2\xi e^{-i\xi\frac{a_j+b_j}{2}}\sin \big(\frac{b_j-a_j}2\xi\big).
\end{equation}
Now, using Plancherel's identity
\begin{equation}
\|\chi_{R_1}\convolution\chi_{R_2}\convolution\cdots\convolution\chi_{R_n}\|_{L^2}\sim \|\dfrac{\sin^n(\frac N2\xi)}{\xi^n}\|_{L^2} \sim N^{n-\frac12}.
\end{equation}
\end{proof}
\end{lemma}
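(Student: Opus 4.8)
The plan is to pass to the Fourier side and compute directly. First I would record the Fourier transform of a single indicator: for the interval $R_j = [a_j, b_j]$ with $b_j - a_j = N$, integration gives
\begin{equation}\label{conv-proof-1}
\widehat{\chi_{R_j}}(\xi) = \int_{a_j}^{b_j} e^{-i\xi x}\,dx = \frac{e^{-i\xi a_j} - e^{-i\xi b_j}}{i\xi} = \frac{2}{\xi}\,e^{-i\xi(a_j+b_j)/2}\,\sin\!\Big(\frac{N}{2}\,\xi\Big),
\end{equation}
so that $|\widehat{\chi_{R_j}}(\xi)| = \big|\tfrac{2}{\xi}\sin(\tfrac N2 \xi)\big|$ is independent of $j$ (the center $(a_j+b_j)/2$ only contributes a unimodular phase). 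Since convolution turns into a product under the Fourier transform, and by Plancherel's identity,
\begin{equation}\label{conv-proof-2}
\|\chi_{R_1}\convolution\cdots\convolution\chi_{R_n}\|_{L^2}^2 = c\,\big\|\,\widehat{\chi_{R_1}}\cdots\widehat{\chi_{R_n}}\,\big\|_{L^2}^2 = c\int_{\R} \frac{2^{2n}\sin^{2n}(\tfrac N2 \xi)}{\xi^{2n}}\,d\xi.
\end{equation}

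Next I would evaluate the scaling of this integral. Substituting $\eta = \tfrac N2 \xi$, i.e. $\xi = \tfrac{2}{N}\eta$ and $d\xi = \tfrac2N\,d\eta$, turns the right-hand side into
\begin{equation}\label{conv-proof-3}
c\,2^{2n}\int_{\R}\frac{\sin^{2n}\eta}{(2\eta/N)^{2n}}\,\frac{2}{N}\,d\eta = c\,N^{2n-1}\int_{\R}\frac{\sin^{2n}\eta}{\eta^{2n}}\,d\eta.
\end{equation}
The remaining integral $\int_\R \sin^{2n}\eta/\eta^{2n}\,d\eta$ is a fixed finite positive constant (for $n\ge 1$ the integrand is bounded near $0$ and decays like $\eta^{-2n}$ at infinity, so it converges, and it is clearly positive). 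Hence $\|\chi_{R_1}\convolution\cdots\convolution\chi_{R_n}\|_{L^2}^2 \sim N^{2n-1}$, which gives $\|\chi_{R_1}\convolution\cdots\convolution\chi_{R_n}\|_{L^2}\sim N^{n-1/2} = |R_j|^{n-1/2}$, as claimed.

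There is essentially no obstacle here; the only mild point of care is justifying convergence of $\int_\R \sin^{2n}\eta/\eta^{2n}\,d\eta$ and positivity of the implied constants, but both are immediate for $n\ge 1$. One could alternatively avoid the explicit integral entirely: the convolution $\chi_{R_1}\convolution\cdots\convolution\chi_{R_n}$ is supported in an interval of length $nN$, is bounded above by $N^{n-1}$, and is bounded below by a constant multiple of $N^{n-1}$ on a central subinterval of length comparable to $N$ (this follows by induction, each convolution with $\chi_{R_j}$ contributing a factor $N$ on a suitably shrunk central region), which yields the same two-sided bound $\|\cdot\|_{L^2}\sim N^{n-1/2}$ without any trigonometric computation. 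I would present the Fourier-side argument as the main line since it is the cleanest.
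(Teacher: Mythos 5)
Your proposal is correct and follows essentially the same route as the paper: compute $\widehat{\chi_{R_j}}$, pass the convolution to a product via Plancherel, and reduce to the scaling of $\int_{\R}\sin^{2n}(\tfrac N2\xi)/\xi^{2n}\,d\xi$. You merely make explicit the substitution $\eta=\tfrac N2\xi$ and the convergence of the resulting constant, which the paper leaves implicit.
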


\begin{lemma}\label{conv-lema2}
Let $R_j:=[a_j, b_j]\times[c_j, d_j]\subset\R^2$, $j=1,\cdots,n$ be rectangles such that $ b_j-a_j=N$ and $d_j-c_j=M$. Then
\begin{equation}\label{conv-e2}
\|\chi_{R_1}\convolution\chi_{R_2}\convolution\cdots\convolution\chi_{R_n}\|_{L^2(\R^2)}\sim (NM)^{n-\frac12}=|R_j|^{n-\frac12}.
\end{equation}
\end{lemma}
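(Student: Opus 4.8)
The plan is to reduce the two-dimensional estimate to the one-dimensional computation already carried out in Lemma \ref{conv-lema1}, by exploiting the product structure of the rectangles. Writing $I_j := [a_j,b_j]$ and $J_j := [c_j,d_j]$, each indicator factors as $\chi_{R_j}(\xi,\eta) = \chi_{I_j}(\xi)\,\chi_{J_j}(\eta)$, i.e. $\chi_{R_j}$ is the tensor product $\chi_{I_j}\otimes\chi_{J_j}$, with $|I_j| = N$ and $|J_j| = M$ for every $j$.

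The key observation is that iterated convolution on $\R^2$ respects this tensor structure: by Fubini's theorem, for tensor products $f_j = g_j\otimes h_j$ one has $(f_1\ast\cdots\ast f_n)(\xi,\eta) = (g_1\ast\cdots\ast g_n)(\xi)\,(h_1\ast\cdots\ast h_n)(\eta)$. Applying this with $g_j = \chi_{I_j}$, $h_j = \chi_{J_j}$ gives $\chi_{R_1}\ast\cdots\ast\chi_{R_n} = (\chi_{I_1}\ast\cdots\ast\chi_{I_n})\otimes(\chi_{J_1}\ast\cdots\ast\chi_{J_n})$, so the $L^2(\R^2)$ norm factors as the product of the two one-dimensional $L^2(\R)$ norms. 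I would then invoke Lemma \ref{conv-lema1} in each variable: the first factor is $\sim N^{n-\frac12}$ and the second is $\sim M^{n-\frac12}$, and multiplying yields $\sim (NM)^{n-\frac12} = |R_j|^{n-\frac12}$, which is the claim. (Equivalently one can mimic the Fourier-side proof of Lemma \ref{conv-lema1} directly: since $\widehat{\chi_{R_j}}(\xi,\eta) = \widehat{\chi_{I_j}}(\xi)\,\widehat{\chi_{J_j}}(\eta)$, the product $\prod_{j=1}^n\widehat{\chi_{R_j}}$ is, up to constants, $\big(\tfrac{\sin(\frac N2\xi)}{\xi}\big)^n\big(\tfrac{\sin(\frac M2\eta)}{\eta}\big)^n$, and Plancherel together with the one-variable estimate $\|\sin^n(\tfrac N2\xi)/\xi^n\|_{L^2}\sim N^{n-\frac12}$ applied in each variable finishes it.)

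I do not expect any serious obstacle here; the statement is genuinely a corollary of Lemma \ref{conv-lema1}. The only points needing a line of care are the tensorization of iterated convolution — an application of Fubini, legitimate since all functions involved are nonnegative and compactly supported, hence integrable — and the remark that the implicit constants in Lemma \ref{conv-lema1} are independent of the common length of the intervals, which is transparent from the scaling $\chi_{[a,a+N]}(x) = \chi_{[0,1]}((x-a)/N)$ implicit in that lemma.
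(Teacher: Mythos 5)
Your proof is correct and is essentially the paper's own argument: the paper likewise factors $\widetilde{\chi_{R_j}}(\xi,\tau)=\widehat{\chi_{[a_j,b_j]}}(\xi)\,\widehat{\chi_{[c_j,d_j]}}(\tau)$ via Fubini and then applies Lemma \ref{conv-lema1} in each variable, which is exactly the tensorization you describe (your physical-side and Fourier-side versions being trivially equivalent).
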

\begin{proof} The proof follows by using Fubini's Theorem 
$$
\widetilde{\chi_{R_j}}(\xi, \tau)=\widehat{\chi_{[a_j, b_j]}}(\xi)\widehat{\chi_{[c_j, d_j]}}(\tau),
$$
followed by Lemma \ref{conv-lema1}. 
\end{proof}

Now  prove the result on  the failure of the trilinear estimates stated in Proposition \ref{prop1ill}.

\begin{proof}[Proof of Proposition \ref{prop1ill}] 
We give details of the proof for the failure of \eqref{tlint-m1} whenever $s<-\frac12$ and $0<\alpha<1$. The proof of \eqref{tlint-m2} follows analogously. 

In fact, we will prove a general result showing that
\begin{equation}
\label{tril-fail2}
\|\p_x(u_1u_2u_3)\|_{X_{s, b'}}\lesssim\|u_1\|_{X_{s, b}}\|u_2\|_{X_{s, b}^{\alpha}}\|u_3\|_{X_{s, b}^{\alpha}}
\end{equation}
fails to hold whenever $s<-\frac12$ by constructing a counter example.

Using definition of the $X_{s,b}$-norm  and Plancherel's identity,  the estimate \eqref{tril-fail2} is equivalent to proving
\begin{equation}\label{tril-fail3}
\| \mathcal{T}_s(f,g,h) \|_{L^2_{\xi} L^2_{\tau}} \lesssim \|f\|_{L^2(\R^2)} \|g\|_{L^2(\R^2)}\|h\|_{L^2(\R^2)},
\end{equation}
where 
\begin{equation}\label{trill-f4}
\mathcal{T}_s(f,g,h):= \Big\|\la\xi\ra^s\la\tau-\xi^3\ra^{b'}\xi\int_{\R^4} \dfrac{\widetilde{f}(\xi_1, \tau_1)\widetilde{g}(\xi_2, \tau_2) \widetilde{h}(\xi_3, \tau_3)}{\langle \xi_1 \rangle^s\la \tau_1-\xi_1^3 \ra^{b} \langle \xi_2 \rangle^s\la \tau_2-{\alpha}\xi_2^3 \ra^{b}\langle \xi_3 \rangle^s\la \tau_3-\alpha\xi_3^3 \ra^{b}}\Big\|_{L^2_{\xi\tau}},
\end{equation}
with $\xi_3=\xi-\xi_1-\xi_2$, $\tau_3=\tau-\tau_1-\tau_2$, and 
$$\widetilde{f}(\xi, \tau):=\la\xi\ra^s\la\tau-\xi^3\ra^b\widetilde{u_1}(\xi, \tau)$$
$$\widetilde{g}(\xi, \tau):=\la\xi\ra^s\la\tau-{\alpha}\xi^3\ra^b\widetilde{u_2}(\xi, \tau)$$
$$\widetilde{h}(\xi, \tau):=\la\xi\ra^s\la\tau-\alpha\xi^3\ra^b\widetilde{u_3}(\xi, \tau).$$

Let $c_1$, $c_2$ and $c_3$ be three constants satisfying
\begin{equation}\label{cond-const}
\begin{split}
&c_1+c_2+c_3=1\\
&c_1^3+\alpha c_2^3+{\alpha}c_3^3 =1.
\end{split}
\end{equation}
Consider three rectangles $R_1$, $R_2$ and $R_3$ with centres at $(c_1N, (c_1N)^3)$, $(c_2N, \alpha(c_2N)^3)$ and $(c_3N, \alpha(c_3N)^3)$ respectively, and each with dimension $N^{-2}\times 1$. Now, consider
$$\widetilde{f} = \chi_{R_1}, \qquad \widetilde{g} = \chi_{R_2}, \qquad \widetilde{h} = \chi_{R_3}.$$

It is easy to show that 
\begin{equation}\label{norms-fgh}
\|f\|_{L^2(\R^2)} =\|g\|_{L^2(\R^2)}
=\|h\|_{L^2(\R^2)}=N^{-1}.
\end{equation}
Also, 
\begin{equation}\label{tri-fail5}
| \xi_1-c_1N|\leq \frac{N^{-2}}2, \qquad |\tau_1-(c_1N)^3|\leq \frac12
\end{equation}
and
\begin{equation}\label{tri-fail6}
|\xi_j-c_jN|\leq \frac{N^{-2}}2, \qquad |\tau_j-\alpha(c_jN)^3|\leq \frac12, \qquad j=2,3.
\end{equation}

For $(\xi, \tau)\in R_1+R_2+R_3$, using \eqref{cond-const}, \eqref{tri-fail5} and \eqref{tri-fail6}, it is easy to prove that
\begin{equation}\label{tri-fail71}
\begin{split}
 |\xi-N|\lesssim N^{-2}, \qquad {\textrm{and}}\qquad 
 |\tau-\xi^3|\lesssim 1.
 \end{split}
\end{equation}
In fact, the first estimate in \eqref{tri-fail71} is a consequence of
$$
|\xi-N|= |\xi_1- Nc_1+\xi_2-N c_1+ \xi_3-N c_3|
$$
and the second estimate  is a consequence of
\begin{equation*}
\begin{split}
|\tau-\xi^3|=&|\tau-N^3+N^3-\xi^3|\\
=& |\tau_1- (Nc_1)^3+\tau_2- \alpha(Nc_2)^3+\tau_3-\alpha (Nc_3)^3+ (N-\xi)(N^2+N \xi+ \xi^2)|.
\end{split}
\end{equation*}

As a result, for $(\xi, \tau)\in R_1+R_2+R_3$ we have $|\xi|\sim N$ and $\la\tau-\xi^3\ra\sim 1$. Also, it is not difficult to show  that $|\xi_j|\sim N$, $j=1,2,3$, and $\la\tau_1-\xi_1^3\ra\sim 1$ and $\la\tau_j-\alpha\xi_j^3\ra\sim 1$, $j=2,3$.

Finally, using  above considerations in \eqref{tril-fail3}, we obtain
\begin{equation*}\label{tri-fail8}
\Big\|N^{s+1}\int_{\R^4} \dfrac{\chi_{R_1}(\xi_1,\tau_1)\chi_{R_2}(\xi_2,\tau_2)\chi_{R_3}(\xi_3,\tau_3)}{N^{3s}}\Big\|_{L^2_{\xi\tau}} \lesssim N^{-3}
\end{equation*}
which is equivalent to 
\begin{equation}\label{tri-fail9}
N^{-2s+1}\big\| \chi_{R_1}\convolution\chi_{R_2}\convolution\chi_{R_3}\big\|_{L^2_{\xi\tau}} \lesssim N^{-3}.
\end{equation}

Now, using estimate \eqref{conv-e2} from Lemma \ref{conv-lema2} with $n=3$ and $|R_j|=N^{-2}$, the estimate \eqref{tri-fail9} yields
\begin{equation}\label{tri-fail10}
N^{-2s+1}N^{-5} \lesssim N^{-3} \Longleftrightarrow N^{-2s-1}\lesssim 1.
\end{equation}

If we choose $N$ large, the estimate \eqref{tri-fail10} fails to hold whenever $s<-\frac12$, and this completes the proof of the theorem.
\end{proof}

\subsection{Failure of bilinear estimates}

Now we  prove the failure of bilinear estimates \eqref{bil-1.1} and \eqref{bil-2.1} stated in Lemma \ref{lema2} whenever $s<-\frac12$. The following proposition shows that the bilinear estimate \eqref{bil-1.1} fails for  $s<-\frac12$. The proof of the failure of  \eqref{bil-2.1} is analogous.
\begin{proposition}\label{gail-b1}
Let $0<\alpha<1$ and $s<-\frac12$, then  the following bilinear estimate
\begin{equation}\label{fail-be1}
\|uv\|_{L^2(\R^2)} \lesssim \|u\|_{X_{-\frac12, \frac12-2\epsilon}^{\alpha}}\|v\|_{X_{s, \frac12+\epsilon}}
\end{equation}
fails to hold.
\end{proposition}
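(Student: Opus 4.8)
The plan is to construct an explicit counterexample, running the same two–box scheme used in Proposition~\ref{prop1ill} (but with two frequency boxes rather than three). First I would pass to the Fourier side: setting $\widetilde f(\xi,\tau)=\la\xi\ra^{-1/2}\la\tau-\alpha\xi^3\ra^{1/2-2\epsilon}\widetilde u(\xi,\tau)$ and $\widetilde g(\xi,\tau)=\la\xi\ra^{s}\la\tau-\xi^3\ra^{1/2+\epsilon}\widetilde v(\xi,\tau)$, Plancherel's identity turns \eqref{fail-be1} into the equivalent assertion that
\begin{equation*}
\Big\| \int_{\R^2}\frac{\la\xi_1\ra^{1/2}\,\widetilde f(\xi_1,\tau_1)\,\widetilde g(\xi_2,\tau_2)}{\la\tau_1-\alpha\xi_1^3\ra^{1/2-2\epsilon}\,\la\xi_2\ra^{s}\,\la\tau_2-\xi_2^3\ra^{1/2+\epsilon}}\,d\xi_1\,d\tau_1\Big\|_{L^2_{\xi\tau}}\lesssim \|f\|_{L^2(\R^2)}\|g\|_{L^2(\R^2)},
\end{equation*}
with $\xi_2=\xi-\xi_1$, $\tau_2=\tau-\tau_1$; equivalently $\|uv\|_{L^2}=\|\widetilde u\ast\widetilde v\|_{L^2}$, where $\widetilde u$ and $\widetilde v$ are the corresponding positive weights times $\widetilde f$ and $\widetilde g$.

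Next I would choose $\widetilde f=\chi_{R_1}$ and $\widetilde g=\chi_{R_2}$, where $R_1$ is the rectangle of dimensions $N^{-2}\times 1$ centred at $(N,\alpha N^3)$ and $R_2$ is the rectangle of dimensions $N^{-2}\times 1$ centred at $(N,N^3)$, with $N\gg 1$. The $N^{-2}\times 1$ scaling is chosen exactly so that the cubic dispersion relations are essentially frozen on each box: for $(\xi_1,\tau_1)\in R_1$ one has $|\alpha\xi_1^3-\alpha N^3|\lesssim N^2\cdot N^{-2}\sim 1$, hence $\la\tau_1-\alpha\xi_1^3\ra\sim 1$ there, and likewise $\la\tau_2-\xi_2^3\ra\sim 1$ on $R_2$, while $|\xi_j|\sim N$ on $R_j$. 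Therefore $\|f\|_{L^2}=\|g\|_{L^2}=|R_j|^{1/2}=N^{-1}$, whereas on the supports of $\chi_{R_1}$ and $\chi_{R_2}$ the weights give the pointwise lower bounds $\widetilde u\gtrsim N^{1/2}\chi_{R_1}$ and $\widetilde v\gtrsim N^{-s}\chi_{R_2}$ (using $-s>1/2>0$), so that $\widetilde u\ast\widetilde v\gtrsim N^{1/2-s}\,\chi_{R_1}\ast\chi_{R_2}$ pointwise.

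Then I would invoke Lemma~\ref{conv-lema2} with $n=2$ and $|R_j|=N^{-2}$ to get $\|\chi_{R_1}\ast\chi_{R_2}\|_{L^2}\sim (N^{-2})^{3/2}=N^{-3}$, hence $\|uv\|_{L^2}\gtrsim N^{1/2-s}N^{-3}=N^{-5/2-s}$. Feeding these into \eqref{fail-be1} would force $N^{-5/2-s}\lesssim N^{-2}$, i.e. $N^{-1/2-s}\lesssim 1$, for all large $N$; this fails as $N\to\infty$ precisely when $s<-\tfrac12$, giving the desired contradiction. (The argument is insensitive to the precise exponents $\tfrac12-2\epsilon$, $\tfrac12+\epsilon$, and in fact works for any fixed Sobolev index $b$, since the chosen boxes carry $O(1)$ modulation so that every weight $\la\tau-(\cdot)\xi^3\ra^{b}$ is $\sim 1$ there.) I do not expect a genuine obstacle; the only point demanding a little care is verifying that the $N^{-2}\times 1$ rectangle shape is the one dictated by the curves $\tau=\xi^3$ and $\tau=\alpha\xi^3$, so that simultaneously the modulations stay $\sim 1$ and the spatial frequencies stay of size $N$.
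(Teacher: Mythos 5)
Your proposal is correct and coincides with the paper's own argument: the same two rectangles of dimension $N^{-2}\times 1$ centred at $(N,\alpha N^3)$ and $(N,N^3)$ (one on each cubic so that both modulations are $O(1)$), the same convolution computation via Lemma \ref{conv-lema2} with $n=2$ giving $\|\chi_{R_1}\ast\chi_{R_2}\|_{L^2}\sim N^{-3}$, and the same final comparison $N^{-5/2-s}\lesssim N^{-2}$, which fails for large $N$ exactly when $s<-\tfrac12$. All the numerology checks out, so there is nothing to add.
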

\begin{proof}
  Using Plancherel's identity, 
the estimate \eqref{fail-be1} is equivalent to showing that
\begin{equation}\label{fail-be2}
\mathcal{B}_s(f,g):=\Big\| \int_{\R^2} \dfrac{\la \xi_2 \ra^{1/2}\widetilde{f}(\xi_2, \tau_2)\widetilde{g}(\xi_1, \tau_1) }{\langle \xi_1 \rangle^s\la \tau_1-\alpha\xi_1^3 \ra^{\frac12+\epsilon} \la \tau_2-\xi_2^3 \ra^{\frac12-2\epsilon} } d\xi_1 d\tau_1 \Big\|_{L^2_{\xi\tau}(\R^2)} \lesssim \|f\|_{L^2(\R^2)} \|g\|_{L^2(\R^2)},
\end{equation}
where 
  $\widetilde{f}(\xi,\tau)=\la\xi\ra^{-\frac12}\la\tau-\alpha\xi^3\ra^{\frac12-2\epsilon}\widetilde{u}(\xi,\tau)$,  $\widetilde{g}(\xi,\tau)=\la\xi\ra^s\la\tau-\xi^3\ra^{\frac12+\epsilon} \widetilde{v}(\xi,\tau)$, $\xi_2=\xi-\xi_1$ and $\tau_2=\tau-\tau_1$.

We will construct functions $f$ and $g$ for which the estimate \eqref{fail-be2} fails to hold when $s<-\frac12$. 

Consider two rectangles $R_1$ and  $R_2$  with centres respectively at $(N, \alpha N^3)$, and  $(N, N^3)$, and each with dimension $N^{-2}\times 1$. Now, consider $f$ and $g$ defined via their Fourier transform
$$\widetilde{g} = \chi_{R_1},  \qquad \widetilde{f}= \chi_{R_2}.$$

It is easy to see that 
\begin{equation}\label{fg-norms}
\|f\|_{L^2(\R^2)} =\|g\|_{L^2(\R^2)}=N^{-1}.
\end{equation}
Also, 
\begin{equation}\label{fail-be5}
| \xi_1-N|\leq \frac{N^{-2}}2, \qquad |\tau_1-\alpha N^3|\leq \frac12
\end{equation}
and
\begin{equation}\label{fail-be6}
|\xi_2-N|\leq \frac{N^{-2}}2, \qquad |\tau_2-N^3|\leq \frac12.
\end{equation}

Using \eqref{fail-be5}, it is easy to prove that,
\begin{equation*}\label{tri-fail7}
 | \tau_1-\alpha\xi_1^3| \leq | \tau_1-\alpha N^3| +|\alpha N^3-\alpha\xi_1^3|\lesssim 1,
\end{equation*}
and similarly using \eqref{fail-be6}
\begin{equation*}\label{tri-fail7}
 | \tau_2-\xi_2^3| \leq | \tau_2- N^3| +|N^3-\xi_2^3|\lesssim 1.
\end{equation*}
Consequently, we have  $|\xi_j|\sim N$ and $\la\tau_1-\alpha\xi_1^3\ra\sim 1$ and $\la\tau_2-\xi_2^3\ra\sim 1$.

With these considerations, we get from \eqref{fail-be2}
\begin{equation}\label{fail-be8}
\begin{split}
\mathcal{B}_s(f,g) 
&\sim \Big\| N^{\frac12-s}\int_{\R^2} \chi_{R_1}(\xi_1, \tau_1)\chi_{R_2}(\xi_2, \tau_2) d\xi_1 d\tau_1 \Big\|_{L^2_{\xi\tau}(\R^2)}\\
&\sim N^{\frac12-s} \|\chi_{R_1}\convolution\chi_{R_2}\|_{L^2(\R^2)}\\
&= N^{\frac12-s}|R_j|^{2-\frac12} = N^{-\frac52-s}.
\end{split}
\end{equation}

Now, using  \eqref{fg-norms} and  \eqref{fail-be8} in \eqref{fail-be2}, 
\begin{equation}\label{fail-be9}
 N^{-\frac52-s} \lesssim N^{-2} \Longleftrightarrow N^{-\frac12-s}\lesssim 1.
\end{equation}

If we choose $N$ large, the estimate \eqref{fail-be9} fails to hold whenever $s<-\frac12$, and this completes the proof of the theorem.
\end{proof}

\subsection{Ill-posedness result}
In this subsection
, we  consider the ill-posedness issue for the IVP \eqref{ivp-sy}.
We start with the following result which is the main ingredient to prove the ill-posedness result stated in Theorem \ref{mainTh-ill1.1}.

\begin{proposition}\label{prop4.1}
Let  $s<-\frac12$, $0<\alpha<1$ and $T>0$. Then there does not exist  a space $X_T^s\times X_T^s$ continuously embedded in $C([0, T]; H^s(\R))\times C([0, T]; H^s(\R))$ such that the estimates
\begin{equation}\label{eq4.01}
\|(U(t)\phi, U^{\alpha}(t)\psi)\|_{X_T^s\times X_T^s}\lesssim \|(\phi, \psi)\|_{H^s\times H^s},
\end{equation}
and
\begin{equation}\label{eq4.02}
\Big\|(\Phi_3, \Psi_3)\Big\|_{X_T^s\times X_T^s}\lesssim \|(v, w)\|_{X_T^s\times X^s_T}^3,
\end{equation}
hold true, where 
\begin{equation*}\label{def-6.1}
\begin{split}
&\Phi_3:= -6\int_0^tU(t-t')\p_x[vw^2](t')dt'\\
&\Psi_3:= -6\int_0^tU^{\alpha}(t-t')\p_x[v^2w](t')dt'.
\end{split}
\end{equation*}
\end{proposition}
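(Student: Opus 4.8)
The plan is to argue by contradiction in the standard way used to show that Picard iteration cannot close below the critical regularity. Suppose such spaces $X_T^s$ exist with the estimates \eqref{eq4.01}--\eqref{eq4.02}. Feed in one-parameter data $\phi = \psi = \delta\phi_N$ (or, more precisely, data whose Fourier transform is concentrated on a well-chosen set of frequencies), apply the third term of the Picard iteration to the linear flow, and show that \eqref{eq4.01}--\eqref{eq4.02} together would force a frequency-localized inequality of the form $N^{\text{(something)}}\lesssim 1$ that fails as $N\to\infty$ when $s<-\tfrac12$. Concretely, I would set $\widehat{\phi_N}=\widehat{\psi_N}=\chi_{I_N}$ where $I_N$ is an interval of length $\sim 1$ centered at $N$ (so $\|\phi_N\|_{H^s}\sim N^s$), substitute $v=U(t)\phi_N$, $w=U^{\alpha}(t)\psi_N$ into $\Phi_3$ and $\Psi_3$, and compute the $H^s$-norm of the output at a fixed time $t\in(0,T]$.

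The key computation is the resonance/phase analysis for the cubic term, which is exactly where the condition $0<\alpha<1$ enters. Writing $\Phi_3$ on the Fourier side, the time integral produces the factor
\begin{equation*}
\int_0^t e^{i(t-t')\xi^3}e^{it'(\xi_1^3+\alpha\xi_2^3+\alpha\xi_3^3)}\,dt' = e^{it\xi^3}\,\frac{e^{it'(\xi_1^3+\alpha\xi_2^3+\alpha\xi_3^3-\xi^3)}-1}{i(\xi_1^3+\alpha\xi_2^3+\alpha\xi_3^3-\xi^3)}\Big|_{t'=0}^{t},
\end{equation*}
so the behavior is governed by the resonance function $R:=\xi_1^3+\alpha\xi_2^3+\alpha\xi_3^3-(\xi_1+\xi_2+\xi_3)^3$. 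When $\alpha=1$ this vanishes whenever two of the $\xi_j$ cancel, producing the known obstruction at $s=\tfrac14$; but for $0<\alpha<1$ there is no such cancellation, and on the relevant frequency region $R$ is comparable to $N^3$. Hence the denominator is large and the time integral contributes a gain of size $N^{-3}$, not $O(1)$. After accounting for the derivative $\partial_x$ (one factor of $\xi\sim N$), the three input norms ($\sim N^{3s}$), the output Sobolev weight ($\sim N^s$), and the measure of the output frequency set, one arrives at an estimate of the schematic form $N^{s+1-3s}\cdot N^{-3}\cdot(\text{volume factor}) \gtrsim$ (something), which after bookkeeping reduces to $N^{-s-\frac12}\lesssim 1$ — contradiction for $s<-\tfrac12$ and $N$ large. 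This mirrors the counting already carried out in the proof of Proposition \ref{prop1ill}, and indeed the arithmetic should line up with the exponents there.

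The main obstacle — and the step requiring the most care — is choosing the frequency configuration so that \emph{all} the constraints are simultaneously satisfied: the three input frequencies $\xi_1,\xi_2,\xi_3$ should each be of size $\sim N$, their sum $\xi=\xi_1+\xi_2+\xi_3$ should also be of size $\sim N$ (so that $|\xi|\sim N$ genuinely contributes through the derivative), the three modulations $\tau_j-\xi_j^3$ (resp.\ $\tau_j-\alpha\xi_j^3$) and the output modulation should all be $O(1)$, and the resonance $R$ should be bounded below by a positive multiple of $N^3$. The point is that for $0<\alpha<1$ one can pick constants $c_1,c_2,c_3$ with $c_1+c_2+c_3\ne$ (the root of the corresponding cubic), so that $R\sim N^3$ on intervals of length $\sim 1$ about $c_jN$; this is precisely the lack of cancellation exploited throughout the paper. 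Once the configuration is fixed, the remaining work is the routine volume count using Lemma \ref{conv-lema2} (or a direct computation), and the continuous-embedding hypothesis $X_T^s\hookrightarrow C([0,T];H^s)$ is used only to pass from the $X_T^s$-norm bound to the $H^s$-norm lower bound at a fixed time $t$. Finally, Theorem \ref{mainTh-ill1.1} follows from this proposition by the standard argument that $C^3$ regularity of the flow map at the origin would supply exactly such spaces via the third Fréchet derivative of the solution map applied to the linear data.
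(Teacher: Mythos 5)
Your overall framework is the right one and matches the paper's: assume the spaces exist, substitute the free evolutions $v=U(t)\phi$, $w=U^{\alpha}(t)\psi$, and derive the fixed--time bound $\|\Phi_3\|_{H^s}\lesssim\|(\phi,\psi)\|_{H^s\times H^s}^3$, which is then contradicted by a frequency--localized example. But the core of your construction --- the resonance analysis --- points in the wrong direction. You propose to choose the frequencies so that the resonance function $A=\xi_1^3+\alpha\xi_2^3+\alpha\xi_3^3-\xi^3$ satisfies $|A|\sim N^3$, and you treat the resulting factor $|A|^{-1}\sim N^{-3}$ from the time integral as the engine of the counterexample. That gain makes $\Phi_3$ \emph{smaller}, which works against you: to contradict an upper bound on $\|\Phi_3\|_{H^s}$ you need the third iterate to be \emph{large}. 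Carrying out your bookkeeping with data normalized to $\|\phi\|_{H^s}\sim\|\psi\|_{H^s}\sim 1$ and intervals of length $\gamma\le 1$ gives $\|\Phi_3\|_{H^s}\sim N^{s}\cdot N\cdot N^{-3s}\cdot N^{-3}\cdot\gamma\lesssim N^{-2s-2}$, which only exceeds $1$ when $s<-1$; your claimed reduction to $N^{-s-\frac12}\lesssim 1$ does not follow from the exponents you list, and the range $-1\le s<-\frac12$ is left uncovered. You have also misread Proposition \ref{prop1ill}: the constants there satisfy $c_1^3+\alpha c_2^3+\alpha c_3^3=(c_1+c_2+c_3)^3$, i.e.\ that construction sits \emph{on} the resonance surface, not off it.

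The paper's proof does the opposite of what you propose: it picks $c_1,c_2,c_3$ with $c_1+c_2+c_3=1$ and $c_1^3+\alpha c_2^3+\alpha c_3^3=1$ (such triples exist for every $0<\alpha<1$; this is where the hypothesis on $\alpha$ enters, since the resonance surface is genuinely different from the $\alpha=1$ one), localizes the data on intervals of width $\gamma=\epsilon N^{-2}$ about $c_jN$, and shows $|A|\lesssim\gamma N^2\sim\epsilon$ on the whole configuration, so that $\bigl|\tfrac{e^{itA}-1}{A}\bigr|\gtrsim t$ and the Duhamel integral contributes a full factor of $t$ with no oscillatory cancellation. Together with the convolution lower bound of Lemma \ref{xmlem} this yields $\|\Phi_3\|_{H^s}\gtrsim N^{-1-2s}t\epsilon$ against $\|(\phi,\psi)\|_{H^s\times H^s}^3\sim 1$, which is exactly the sharp threshold $s<-\frac12$. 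To repair your argument you must replace the non-resonant configuration by this near-resonant one (and correspondingly shrink the frequency supports to width $N^{-2}$ so the phase stays bounded); the rest of your outline, including the passage from the $X^s_T$ bounds to the fixed-time $H^s$ bound via the continuous embedding and the deduction of Theorem \ref{mainTh-ill1.1}, is then in order.
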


\begin{proof}
The proof follows a contradiction argument. If possible, suppose that there exists a space $X_T^s\times X_T^s$ that is continuously embedded in $C([0, T]; H^s(\R))\times C([0, T]; H^s(\R))$ such that the estimates \eqref{eq4.01} and \eqref{eq4.02} hold true.  If we consider $v= U(t)\phi$ and $w= U^{\alpha}(t)\psi$, then from \eqref{eq4.01} and \eqref{eq4.02}, we get
\begin{equation}\label{eq4.03}
\|\Phi_3\|_{H^s}\leq \Big\|(\Phi_3, \Psi_3)\Big\|_{H^s\times H^s}\lesssim \|(\phi, \psi)\|_{H^s\times H^s}^3,
\end{equation}
 where
 \begin{equation}\label{def-6.2}
\begin{split}
&\Phi_3:= -6\int_0^tU(t-t')\p_x\big[U(t')\phi(U^{\alpha}(t')\psi)^2\big]dt'\\
&\Psi_3:= -6\int_0^tU^{\alpha}(t-t')\p_x\big[(U(t')\phi)^2U^{\alpha}(t')\psi\big]dt'.
\end{split}
\end{equation}

The main idea to complete the proof  is to find an appropriate initial data $(\phi, \psi)$ for which the estimate \eqref{eq4.03} fails to hold whenever $s<-\frac12$.

Let $N\gg 1$, $\gamma=\gamma(N)$ to be chosen later such that $0< \gamma \ll 1$. Let us consider
$\phi$, $\psi$ defined via Fourier transform
\begin{equation}\label{phi-hat1}
\widehat{\phi}(\xi)=\gamma^{-\frac12}N^{-s} \chi_{R_1}(\xi)
\end{equation}
and
\begin{equation}\label{psi-hat1}
\widehat{\psi}(\xi)=\gamma^{-\frac12}N^{-s}\big[\chi_{R_2}(\xi)+\chi_{R_3}(\xi)\big]=\gamma^{-\frac12}N^{-s}P(\xi),
\end{equation}
where \begin{equation}\label{Rjs}
R_j(\xi)=\{\xi: |\xi-  c_jN|\leq \gamma\}, \qquad j=1,2,3
\end{equation}
with constants $c_j$ satisfying
\begin{equation}\label{cond-cs}
c_1+ c_2+  c_3=1, \qquad \quad c_1^3+\alpha c_2^3+\alpha c_3^3=1.
\end{equation}

In view of the definition of unitary groups $U$ and  $U^{\alpha}$, and choices  of  $\phi$ and $\psi$ respectively in \eqref{phi-hat1} and \eqref{psi-hat1}, we have
\begin{equation}\label{psi1-hat}
\widehat{U^{\alpha}(t)\psi}(\xi)=e^{i\alpha t\xi^3}\widehat{\psi}(\xi) = \gamma^{-\frac12}N^{-s}e^{i\alpha t\xi^3}P(\xi),
\end{equation}
and
\begin{equation}\label{psi1-hatU}
\widehat{U(t)\phi}(\xi)=e^{i t\xi^3}\widehat{\phi}(\xi) = \gamma^{-\frac12}N^{-s}e^{i t\xi^3}\chi_{R_1}(\xi).
\end{equation}
Let  $\Phi_1:=U(t)\phi$ and $\Psi_1:=U^{\alpha}(t)\psi$. Taking Fourier transform in \eqref{def-6.2}, we obtain
\begin{equation}\label{FT-phi3}
\begin{split}
\widehat{\Phi_3}(\xi, t) = -6\int_0^te^{i(t-t')\xi^3}i\xi\int_{\R^2}\widehat{\Psi_1}(\xi-\xi_1-\xi_2)\widehat{\Psi_1}(\xi_2)\widehat{\Phi_1}(\xi_1)d\xi_1 d\xi_2dt'\\
\widehat{\Psi_3}(\xi, t) = -6\int_0^te^{i(t-t')\alpha\xi^3}i\xi\int_{\R^2}\widehat{\Phi_1}(\xi-\xi_1-\xi_2)\widehat{\Phi_1}(\xi_2)\widehat{\Psi_1}(\xi_1)d\xi_1 d\xi_2dt'.
\end{split}
\end{equation}

Let $\xi_3=\xi-\xi_1-\xi_2$.  For the choices of $\phi$ and $\psi$, using the relations \eqref{psi1-hat} and \eqref{psi1-hatU} in \eqref{FT-phi3}, we get
\begin{equation}\label{FT-phi3.2}
\begin{split}
\widehat{\Phi_3}(\xi, t)& =  -6e^{it\xi^3}i\xi\int_0^te^{-it'\xi^3}\int_{\R^2} \gamma^{-\frac32}N^{-3s}e^{i t'(\xi_1^3+\alpha\xi_2^3+\alpha\xi_3^3)}\chi_{R_1}(\xi_1) P(\xi_2)P(\xi_3)d\xi_1 d\xi_2dt'\\
&=-6e^{it\xi^3}i\xi\gamma^{-\frac32}N^{-3s}\int_{\R^2}\chi_{R_1}(\xi_1) P(\xi_2)P(\xi_3)\int_0^te^{i t'(\xi_1^3+\alpha\xi_2^3+\alpha\xi_3^3-\xi^3)}dt'd\xi_1 d\xi_2\\
&=-6e^{it\xi^3}i\xi\gamma^{-\frac32}N^{-3s}\int_{\R^2}\chi_{R_1}(\xi_1) P(\xi_2)P(\xi_3)\frac{e^{itA}-1}{iA}d\xi_1 d\xi_2,
\end{split}
\end{equation}
where $A:= \xi_1^3+\alpha\xi_2^3+\alpha\xi_3^3-\xi^3$. 

To continue with the proof we need the following  lemmas.

\begin{lemma}\label{Lema-c3.1}
Let $N\gg 1$, $0<\gamma\ll 1$, $|\xi-N|\leq \gamma$, $\xi_1 \in R_1$ and $R_j$, $j=1,2,3$ as defined in \eqref{Rjs}, then 
$$(\xi_1, \xi_2, \xi_3)\in R_{1}\times R_{2}\times R_{3},$$ or
$$(\xi_1, \xi_2, \xi_3)\in R_{1}\times R_{3}\times R_{2}.$$
\end{lemma}
\begin{proof}
We prove it using contradiction argument. If possible, suppose that $(\xi_1, \xi_2, \xi_3)\notin R_{1}\times R_{2}\times R_{3}$ and $(\xi_1, \xi_2, \xi_3)\notin R_{1}\times R_{3}\times R_{2}$. It means, one can get two members of $\{\xi_1, \xi_2, \xi_3\}$ belonging to the same rectangle. For simplicity of exposition, let us consider the case when $\xi_1\in R_1$ and $\xi_2, \xi_3\in R_2$ (the other case can be handled analogously.) Therefore, by definition
\begin{equation*}\label{lm-c4.1}
|\xi_1-c_1N|\leq \gamma, \qquad |\xi_2-c_2N|\leq \gamma \qquad {\textrm{and}}\qquad |\xi_3-c_2N|\leq \gamma.
\end{equation*}
Noting that $\xi=\xi_1+\xi_2+\xi_3$ and using the first relation in \eqref{cond-cs}, we get
\begin{equation*}\label{lem-c4.2}
\begin{split}
|\xi-N|&=|\xi_1+\xi_2+\xi_3-(c_1N+c_2N+c_3N)|\\
&= |\xi_1-c_1N +\xi_2-c_2N +\xi_3-c_2N +(c_2-c_3)N|\\
&\geq |c_2-c_3|N-3\gamma\\
&\geq \frac12|c_2-c_3|N,
\end{split}
\end{equation*}
if we choose $\gamma \leq \frac16|c_2-c_3|N$. From last expression we obtain that $|\xi-N|\gtrsim N$ which contradicts the fact that $|\xi-N|\leq \gamma$.
\end{proof}

\begin{lemma}\label{Lema-c3.2}
Let $N\gg 1$, $0<\gamma\ll 1$ and $A$ be as defined in \eqref{FT-phi3.2}, $|\xi-N|\leq \gamma$ and
$$(\xi_1, \xi_2, \xi_3)\in R_{1}\times R_{2}\times R_{3}$$ or
$$(\xi_1, \xi_2, \xi_3)\in R_{1}\times R_{3}\times R_{2}.$$ Then one has
\begin{equation}\label{lem-c4.5}
|\xi^3-N^3|\lesssim \gamma N^2, \qquad {\textrm{and}}\qquad |A|\lesssim \gamma N^2.
\end{equation}

\end{lemma}
\begin{proof}
As $|\xi|\sim N$, one has $|\xi^3-N^3|=|\xi-N||\xi^2+\xi N+N^2|\leq 3\gamma N^2$, which implies the first inequality in \eqref{lem-c4.5}. 

Now we move to prove the second inequality in \eqref{lem-c4.5}.  We have 
\begin{equation}\label{x1lem-c4.5}|\xi_1-c_1N|\leq \gamma, \qquad |\xi_2-c_2N|\leq \gamma \qquad{\textrm{and}}\qquad|\xi_3-c_3N|\leq \gamma\end{equation} or
\begin{equation}\label{x2lem-c4.5}|\xi_1-c_1N|\leq \gamma, \qquad |\xi_2-c_3N|\leq \gamma \qquad{\textrm{and}}\qquad|\xi_3-c_2N|\leq \gamma\end{equation}
and consequently $|\xi_j|\sim N$ for all $j=1,2,3$.

Now, using the second condition in \eqref{cond-cs} and \eqref{x1lem-c4.5}
\begin{equation}\label{lem-c4.7}
\begin{split}
|A| &= |\xi_1^3+\alpha\xi_2^3+\alpha\xi_3^3-N^3+N^3-\xi^3|\\
&\leq |\xi_1^3+\alpha\xi_2^3+\alpha\xi_3^3-(c_1N)^3-\alpha(c_2N)^3-\alpha(c_3N)^3|+|N^3-\xi^3|\\
&\leq |\xi_1^3-(c_1N)^3|+\alpha |\xi_2^3-(c_2N)^3|+\alpha |\xi_3^3-(c_3N)^3|+ \gamma N^2\\
&\lesssim \gamma N^2.
\end{split}
\end{equation}
Similarly using \eqref{x2lem-c4.5}, we also have $|A| \lesssim \gamma N^2$.
\end{proof}
 The following Lemma  was proved in  \cite{Oh09}.
\begin{lemma}\label{tadahirolem}
Let $R$ and $\tilde{R}$ be intervals centered at $a$ and $b$ whose dimensions are $2\alpha$. Let $R_0$ be the translate of $R$ centered at $a+b$. Then we have
$$
\chi_R * \chi_{\tilde{R} }(\xi) \geq \alpha \chi_{R_0} (\xi)=\frac12 \,|R|\chi_{R_0}(\xi).
$$
\end{lemma}
\begin{lemma}\label{xmlem}
Let $R_j$, $j=1,2,3$ be intervals centered at $a_j$, $j=1,2,3$ each with dimension $2\alpha $. Let $R_0$ be the translate of $R_1$ centered at $a_1+a_2+a_3$. Then we have
$$
\chi_{R_1} * \chi_{R_2}* \chi_{R_3}(\xi) \geq \alpha^2 \chi_{R_0} (\xi)=\frac{|R_1|^2}{4}\chi_{R_0}(\xi).
$$
\end{lemma}
\begin{proof}
Using the definition of convolution and Lemma \ref{tadahirolem} we obtain
\begin{equation}\label{bil-5}
\begin{split}
\chi_{R_1} * \chi_{R_2}* \chi_{R_3}(\xi)= &\int_{R_1}\chi_{R_2} * \chi_{R_3}(\xi_1-\xi) d\xi_1\,\\
\geq &\int_{R_1} \alpha  \chi_{\tilde{R_0}} (\xi_1-\xi) d\xi_1\\
= & \alpha \chi_{R_1}*\chi_{\tilde{R_0}} (\xi)\\
\geq & \alpha^2 \chi_{R_0} (\xi),
\end{split}
\end{equation}
where $\tilde{R_0}$ be the translate of $R_2$ centered at $a_2+a_3$ and $R_0$ be the  translate of $R_1$ centered at $a_1+a_2+a_3$ and
$(\xi_1, \xi_2, \xi_3)\in R_{1}\times R_{2}\times R_{3}.$
\end{proof}

Now, we retake the proof of Proposition \ref{prop4.1}. We choose $N\gg 1$ and $\gamma =\gamma(N) = \epsilon N^{-2}$, for $\epsilon>0$, so that $|A|\lesssim \epsilon$. With this choice, we have, for any $t>0$
$$\Big|\frac{e^{itA}-1}{A}\Big| =\Big| \frac{\cos tA-1}{A}+i\frac{\sin tA}{A}\Big|\geq \Big|\frac{\sin tA}{A}\Big|\geq \dfrac{t}2.$$
Therefore if $|\xi-N|\leq \gamma$, from \eqref{FT-phi3.2} and Lemma \ref{Lema-c3.1}, we obtain
\begin{equation}\label{FT-phi3.3}
\begin{split}
|\widehat{\Phi_3}(\xi, t)|&\gtrsim |\xi|\gamma^{-\frac32}N^{-3s}\Big|\int_{\R^2}\chi_{R_{1}}(\xi_1)P(\xi_2)P(\xi_3)\frac{e^{itA}-1}{iA}d\xi_1 d\xi_2\Big|\\
&\gtrsim |\xi|\gamma^{-\frac32}N^{-3s} t \Big|\chi_{R_{1}}\convolution\chi_{R_{2}}\convolution\chi_{R_{3}}(\xi)\Big|.
\end{split}
\end{equation}

Now, we move to calculate the $H^s$-norm of $\Phi_3$, using Lemma \ref{xmlem} we obtain
\begin{equation}\label{hs-norm-phi3}
\begin{split}
\|\Phi_3\|_{H^s}&=\|\la\xi\ra^s\widehat{\Phi_3}(\xi,t)\|_{L^2(\R)}\geq \|\la\xi\ra^s\widehat{\Phi_3}(\xi,t)\|_{L^2(|\xi-N|\leq\gamma)}\\
&\gtrsim N^sN\gamma^{-\frac32}N^{-3s} t \Big\|\chi_{R_{1}}\convolution\chi_{R_{2}}\convolution\chi_{R_{3}}(\xi)\|_{L^2(|\xi-N|\leq\gamma)}\\
&\gtrsim N^{1-2s}\gamma^{-\frac32} t \Big\|\gamma^2\chi_{\{|\xi-N|\leq\gamma\}}(\xi)\Big\|_{L^2(|\xi-N|\leq\gamma)}\\
&\gtrsim N^{1-2s}\gamma^{\frac12} t \big|\{|\xi-N|\leq\gamma\}\big|^{\frac12}\\
&\gtrsim N^{1-2s}\gamma t ,
\end{split}
\end{equation}
where in the last step of above inequality used  $\big|\{|\xi-N|\leq\gamma\}\big|\sim \gamma$.  

Therefore, for the choice of $\gamma =\gamma(N) = \epsilon N^{-2}$ we get from \eqref{hs-norm-phi3} that
\begin{equation}\label{hs-norm-phi3.1}
\|\Phi_3\|_{H^s}\gtrsim N^{-1-2s}t\epsilon.
\end{equation}
On the other hand, we have 
\begin{equation}\label{norm phi-psi}
\|(\phi, \psi)\|_{H^s\times H^s} =\|\psi\|_{H^s} \sim 1.
\end{equation}

In view of \eqref{hs-norm-phi3.1} and \eqref{norm phi-psi}, we can conclude that the estimate \eqref{eq4.03} fails to hold when $s<-\frac12$.
\end{proof}


Now, we prove the ill-posedness result stated in Theorem \ref{mainTh-ill1.1}.

\begin{proof}[Proof of Theorem \ref{mainTh-ill1.1}]
For $(\phi, \psi)\in H^s(\R)\times H^s(\R)$, consider the Cauchy problem
\begin{equation}\label{ivp3}
\begin{cases}
\p_tv + \p_x^3v +\p_x(vw^2)=0,\\
\p_tw + \alpha \p_x^3w +\p_x(v^2w) =0,\\
v(x, 0)=\delta\phi,\qquad w(x, 0)=\delta\psi,
 \end{cases}
\end{equation}
where   $\delta >0$ is a parameter. The solution  $(v(x,t), w(x,t)):=(v^{\delta}(x,t), w^{\delta}(x,t))$ of the IVP \eqref{ivp3} depends on the parameter $\delta$. The equivalent integral equation can be written as
\begin{equation}\label{duhamel-c2}
\begin{cases}
v(t,\delta)=\delta U(t)\phi-
\int_0^tU(t-t')\p_x(vw^2)(t')dt'\\
w(t,\delta)=\delta U^{\alpha}(t)\phi-\int_0^tU^{\alpha}(t-t')\p_x(v^2w)(t')dt',
\end{cases}
\end{equation}
where $U(t)$ and $U^{\alpha}(t)$ are the unitary groups describing the solution of the linear part of the IVP \eqref{ivp3}. Note that, by uniqueness of solution we have $v^0(x,t)=0=w^0(x,t)$.

Differentiating \eqref{duhamel-c2} with respect to $\delta$ and evaluating at $\delta=0$ we get
\begin{equation}\label{der-c1}
\begin{cases}
\p_{\delta}v(t,0)=U(t)\phi=:\Phi_1\\ 
\p_{\delta}w(t,0)=U^{\alpha}(t)\psi=:\Psi_1.
\end{cases}
\end{equation}

\begin{equation}\label{der-c2}
\p_{\delta}^2v(t,0)=0 =
\p_{\delta}^2w(t,0).
\end{equation}
\begin{equation}\label{der-c3}
\begin{cases}
\p_{\delta}^3v(t,0)=-6\int_0^tU(t-t')\p_x\big[\Phi_1(\Psi_1)^2\big](t')dt'=:\Phi_3\\ 
\p_{\delta}^3w(t,0)=-6\int_0^tU^{\alpha}(t-t')\p_x\big[(\Phi_1)^2\Psi_1\big](t')dt'=:\Psi_3.
\end{cases}
\end{equation}

If the flow-map is $C^3$ at the origin from $H^s(\R)\times H^s(\R)$ to $C([-T, T];H^s(\R))\times C([-T, T];H^s(\R))$, we must have
\begin{equation}\label{eq-bilin}
\|(\Phi_3, \Psi_3)\|_{L_T^{\infty}H^s(\R)\times H^s(\R)}\lesssim \|(\phi, \psi)\|_{H^s(\R)\times H^s(\R)}^3.
\end{equation}

But from  Proposition \ref{prop4.1}  we have seen that the estimate \eqref{eq-bilin} fails to hold for $s<-\frac12$ if we consider $\phi$ and $\psi$ given respectively by \eqref{phi-hat1} and \eqref{psi-hat1}, and this completes the proof of the theorem.
\end{proof}
\begin{remark}
The ill-posedness result obtained in this work does not depend on the sign of $\alpha$.
\end{remark}

\bigskip
\noindent



\end{document}